\documentclass{article}
\usepackage{amsfonts}
\usepackage{amsmath}
\usepackage{color}
\usepackage{amssymb}
\usepackage{epsfig}
\RequirePackage{psfrag}
\RequirePackage{graphicx}
\usepackage{multicol}
\usepackage{float}

\usepackage[colorlinks=true]{hyperref}
\hypersetup{urlcolor=blue, citecolor=blue}
\usepackage[top = 2cm, left=3cm]{geometry}


\newtheorem{theorem}{\bf Theorem}[section]
\newtheorem{lemma}[theorem]{\bf Lemma}
\newtheorem{proposition}[theorem]{\bf Proposition}

\newtheorem{definition}[theorem]{\bf Definition}

\newtheorem{rmk}[theorem]{{\bf Remark}.\it}{}
\newenvironment{keywords}{\small {\bf key words}.\it}{\vskip 10pt}
\newenvironment{AMS}{\small {\bf AMS subject classification}.\it}{\vskip 10pt}
\newenvironment{proof}{{\it Proof: }}{\hfill $\square$}

\def\O{\Omega}
\def\eps{\varepsilon}
\def\OiT{\Omega_i\times(0,T)}

\def\Dd{\mathcal{D}}

\def\phii{\varphi_i}
\def\ds{\displaystyle}
\def\R{\mathbb{R}}
 \def\N{\mathbb{N}}
\def\l{\lambda}
 \def\a{\alpha}
 \def\b{\beta}
 
 \def\beqn{\begin{eqnarray}}
 \def\eeqn{\end{eqnarray}}
 \def\be{\begin{equation}}
 \def\ee{\end{equation}}
 \def\nn{\nonumber}
 \def\refe#1{(\ref{#1})}
\def\ue{u^\eps}

\def\iint{\int\!\!\!\int}
 \def\Mm{\mathcal{M}}
 
 \def\Ee{\mathcal{E}}
 \def\Uu{\mathcal{U}}
 \def\Pih{{\refe{eq_BL_i}-\refe{trans_BL_i}}}
  \def\Pip{{\refe{eq_tot_i}-\refe{trans_flux}-\refe{trans_p}}}
 \def\k{\kappa}
 \def\sign{\mathrm{sign}}
 \def\t{\tilde}
 
 \def\s{\sigma}

\def\Cc{\mathcal{C}}

\begin{document}
\author{Cl\'ement  Canc\`es\thanks{UPMC Univ Paris 06, UMR 7598, Laboratoire Jacques-Louis Lions, F-75005, Paris, France (\href{mailto:cances@ann.jussieu.fr}{\tt cances@ann.jussieu.fr})}
\thanks{The author is partially supported by GNR MoMaS}
}
\title{Asymptotic behavior of two-phase flows in heterogeneous porous media for capillarity depending only on space.\\ {I}. {C}onvergence to the optimal  entropy solution.}
\maketitle

\begin{abstract}
We consider an immiscible two-phase flow in a heterogeneous one-dimensional porous medium. We suppose particularly that the capillary pressure field is discontinuous with respect to the space variable. The dependence of the capillary pressure with respect to the oil saturation is supposed to be weak, at least for saturations which are not too close to $0$ or $1$. 
We study the asymptotic behavior when the capillary pressure tends to a function which does not depend on the saturation. In this paper, we show that if the capillary forces at the spacial discontinuities are oriented in the same direction that the 
gravity forces, or if the two phases move in the same direction, then the saturation profile with capillary diffusion converges toward the unique optimal entropy solution to the hyperbolic scalar conservation law with discontinuous flux functions.
\end{abstract}
\begin{keywords}
entropy solution, scalar conservation law, discontinuous porous media, capillarity
\end{keywords}
\begin{AMS}
35L65, 76S05
\end{AMS}
%
%
\section{Presentation of the problem}\label{intro}

The resolution of multi-phase flows in porous media are widely used in oil engineering to predict 
the motions of oil in subsoil. Their mathematical study is however difficult, then some physical
assumptions have to be done, in order to get simpler problems 
(see e.g. \cite{AS79,Bear72,GMT96}). A classical simplified model, so-called 
\emph{dead-oil} approximation, consists in assuming that there is no gas, i.e.  
that the fluid is composed of two immiscible and incompressible phases and in neglecting all the different chemical 
species. The oil-phase and the water-phase are then both made of only one component.
\subsection{the dead-oil problem in the one dimensional case}
Suppose that $\R$ represents a one dimensional homogeneous porous medium, with porosity $\phi$ (which
is supposed to be constant for the sake of simplicity). 
If $u$ denotes the saturation of the water phase, and so $(1-u)$ the saturation of the 
oil phase
thanks to the dead-oil approximation, 
writing the volume conservation of each phase leads to:
\beqn
&\ds \phi \partial_t u + \partial_x V_w = 0,& \label{cons_w_1}\\
& -\ds \phi \partial_t u +  \partial_x V_o = 0, & \label{cons_o_1}
\eeqn
where $V_o$ (resp. $V_w$) is the filtration speed of the oil phase (resp. water phase). 
Using the empirical diphasic Darcy law, we claim that
\be\label{Darcy}
 V_\b= -K \frac{k_{r,\b}(u)}{\mu_\b} \left( \partial_x P_\b -\rho_\b {g} \right),  \qquad \b=o,w,
\ee
where $K$ is the global permeability, only depending on the porous media, $\mu_\b, P_\b, \rho_\b$ are 
respectively  the dynamical viscosity, the pressure and the density of the phase $\b$, ${g}$ 
represents the effect of gravity, $k_{r,\b}$ denotes the relative permeability of the phase $\beta$.
This last term comes from the interference of the two phases in the porous media. 
\vskip 5pt
There exists $s_\star\in [0,1)$ such that the function 
$k_{r,w}$ is non-decreasing, with $k_{r,w}(u)=0$ if $0\le u\le s_\star < 1$, 
and $k_{r,w}$ is increasing on $[s_\star,1]$. 
The function $k_{r,o}$ is supposed to be non-increasing, with $k_{r,o}(1)=0$.
We suppose that there exists $s^\star\in(s_\star,1]$ such that $k_{r,o}(s)=0$ 
for $s\in[s^\star,1)$, and $k_{r,o}$ is decreasing on $[0,s^\star)$.
\vskip 5pt
The pressures are supposed to be linked by the relation
\be\label{P_cap}
P_{cap} (u)=P_w-P_o,
\ee
where $P_{cap}$ is a smooth non-decreasing function called \emph{capillary pressure}.
\vskip 5pt
Adding \refe{cons_w_1} and \refe{cons_o_1}, and using \refe{Darcy} and \refe{P_cap} yields
$$
-\partial_x\left(
\sum_{\b=o,w} K \frac{k_{r,\b}(u)}{\mu_\b} \left( \partial_x P_\b -\rho_\b {\bf g} \right)
\right)=0,
$$
and thus there exists $q$, called total flow-rate,  only depending on time,  such that
\be\label{q}
-\sum_{\b=o,w} K \frac{k_{r,\b}(u)}{\mu_\b} \left( \partial_x P_\b -\rho_\b {g} \right)=q.
\ee
Using \refe{Darcy}, \refe{P_cap} and \refe{q}, \refe{cons_w_1} can be rewritten 
\beqn
&\ds \phi \partial_t u + \partial_x\left( \frac{q k_{r,w}(u)}{k_{r,w}(u)+\frac{\mu_w}{\mu_o}k_{r,o}(u)}\right) &\nn\\
-& \ds K \partial_x \left(\frac{k_{r,w}(u)k_{r,o}(u)}{\mu_o k_{r,w}(u)+\mu_w k_{r,o}(u)}\left(
\partial_xP_{cap}(u)-(\rho_w-\rho_o){g}\right)\right)=0. &\label{cons_w_2}
\eeqn
Supposing that the total 
flow rate $q$ does not depend on times, and after a convenient 
rescaling, equation  \refe{cons_w_2} becomes 
\be\label{eq_h}
\partial_t u + \partial_x( f(u) - \l (u) \partial_x \pi(u))=0,
\ee
where $f$ is a Lipschitz continuous function, fulfilling $f(0)=0$, $f(1)=q$, $\l$  is a nonnegative Lipschitz 
continuous functions, 
with $\l(0)=\l(1)=0$, and $\pi$ is a non-decreasing
function, also called capillary pressure. 
The effects of capillarity are often neglected, particularly in the case of reservoir simulation, and so \refe{eq_h} turns 
to a nonlinear hyperbolic equation called Buckley-Leverett equation, and we have to consider 
the initial-value problem
\be\label{BL}\tag{$\mathcal{BL}$}
\left\{\begin{array}{l}
\partial_t u + \partial_x f(u) =0, \\
u(0)=u_0.
\end{array}
\right.
\ee
 \subsection{discontinuous flux functions and optimal entropy solution}
We now consider heterogeneous one dimensional porous media, i.e.  an apposition of several 
homogeneous porous media with different physical properties. This leads to discontinuous 
functions with respect to the spatial variable. For the sake of simplicity, we assume that the heterogeneous 
porous medium is made of only two homogeneous porous media 
represented by the open subsets $\O_1=\R_-^\star$ and $\O_2=\R_+^\star$. Keeping the notations 
of \refe{cons_w_2}, $\phi, K, k_{r,\b}(u,\cdot)$ and $\pi(u,\cdot)$ are now discontinuous functions, i.e.  piecewise constant functions, denoted $\phi_i, K_i, k_{r,\b,i}$ and $\pi_i$ in $\O_i$.
Thus the problem becomes
\be\label{eq_tot_i}
\left\{\begin{array}{l}
\partial_t u + \partial_x( f_i(u) - \l_i (u) \partial_x \pi_i(u))=0,\\
u(0)=u_0, \\
+ \textrm{ transmission condition at }x=0,
\end{array}\right.
\ee
where 
$f_i$ are Lipschitz continuous functions on $[0,1]$, and can be decomposed in the following way:
\be\label{decomp_f}
f_i(u)= q r_i(u) + \l_i(u) (\rho_w-\rho_o){g},
\ee
where $r_i$ is a non-decreasing Lipschitz continuous function fulfilling $r_i(0)=0$, $r_i(1)=1$, 
and $\l_i$ is a non-negative Lipschitz continuous function fulfilling $\l_i(0)=0$, $\l_i(1)=0$. 
We stress here the fact that $q$ and $(\rho_w-\rho_o){g}$ neither depend on the subdomain $i$ nor on time.
\vskip 5pt
We now have to give more details on this transmission conditions at $x=0$.
First neglect the effects of capillarity, so that \refe{eq_tot_i} becomes the apposition of two 
Buckley-Leverett equations, linked by a transmission condition.
\be\label{eq_BL_i}
\left\{\begin{array}{l}
\partial_t u + \partial_x f_i(u) =0,\\
u(0)=u_0, \\
+ \textrm{ transmission condition at }x=0,
\end{array}\right.
\ee
We ask the conservation of mass at the interface between the two porous media, 
then we have to connect the flux. Denoting $u_i$ the trace (if it exists) of $u_{|\O_i}$ on 
$\{x=0\}$, this means that the following Rankine-Hugoniot condition has to be fulfilled:
\be\label{trans_BL_i} 
f_1(u_1)=f_2(u_2).
\ee
Some assumptions has to be done on the flux functions $f_i$ in order to carry out the study. Firstly, we 
suppose that the total flow-rate $q$ is a non-negative constant. Dealing with non-positive $q$ is also possible, since it suffices to change $x$ by $-x$ and $u$ by $(1-u)$. Secondly, we suppose that each $f_i$ has a simple dynamic on $[0,1]$. More precisely, 
\be\label{croissance_hyp2}
\exists b_i \in [0,1) \textrm{ s.t. } f_i \textrm{ is decreasing on} (0,b_i) \textrm{ and increasing on } (b_i,1).  
\ee
With Assumption \refe{croissance_hyp2}, we particularly ensure that 
$$
q = f_i(1)=\max_{s\in[0,1]}(f_i(s)).
$$
The physical meaning of~\eqref{croissance_hyp2} is that buoyancy works on the oil-phase 
in the sense of decaying $x$. The case $b_i=0$ can also correspond to situations where the total flow rate $q$ is sufficiently strong for ensuring that both phases always move in the same direction. Indeed, 
The oil-flux, given in $\O_i$ by $f_i(u)$ has the same sign as the water-flux, given by $q-f_i(u)$. 
Note that the assumption on the dynamic on $f_i$ is often fulfilled by the physical models, as it is stressed in \cite{AJV04} (see also \cite{EGV03}). 
\vskip 5pt

Thirdly, we assume
\begin{equation}\label{Hyp:genuine}
f_1 \textrm{ and } f_2 \textrm{ are not linear on any non-degenerate interval of }(0,1).
\end{equation}
\begin{figure}[htb]
\centering
\includegraphics[width=6cm]{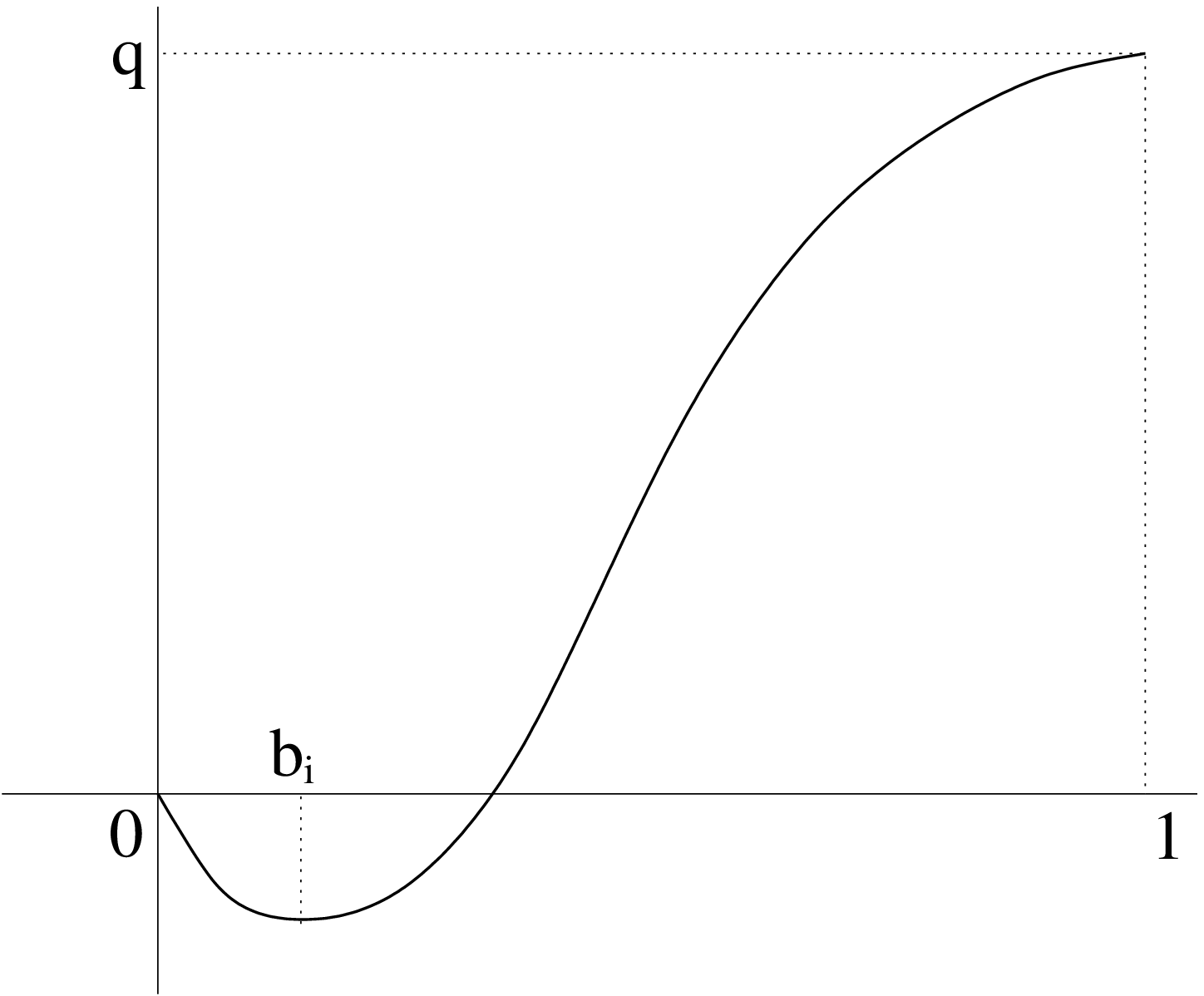}
\caption{example of $f_i$ fulfilling \refe{croissance_hyp2} and \refe{Hyp:genuine}}
\label{f_i} 
\end{figure}
This latter assumption allows us to claim, thanks to \cite{Pan07} (see also \cite{Vas01}) that a solution of 
$$
\partial_t u + \partial_x f_i(u) = 0
$$
satisfying the entropy inequalities in $\O_i\times(0,T)$: $\forall \k \in [0,1]$
\begin{equation}\label{eq:entro_i}
\partial_ t | u-\k | + \partial_x \left( \sign(u-\k) (f_i(u) - f_i(\k) ) \right)\le 0 \qquad \textrm{ in } \Dd'(\O_i\times[0,T))\end{equation}
admits a strong trace $u_i$ on $\{x=0\}\times(0,T)$. 
\vskip 5pt
\begin{rmk}\label{rmk:trace}
As it is proven in Section~\ref{aeconv}, if $u_0\in BV(\R)$, the solution $u$ we consider has strong traces on the interface without assuming~\eqref{Hyp:genuine}. The assumption~\eqref{Hyp:genuine} provides the existence of strong traces for initial data in $L^\infty(\R)$.
\end{rmk}
\vskip 5pt
The problem \refe{eq_BL_i}-\refe{trans_BL_i} has been widely studied recently (see e.g. 
\cite{AJV04, AMV07, AP05,Bac04,Bac06,BV05,CEK08, Die95, Die96, DW96, GNPT07,Jim07, Kaa99, KRT02a,KRT02b,KRT03,Pan09, SV03,Tow00,Tow01}). It has been particularly shown by Adimurthi, Mishra and Veerappa Gowda \cite{AMV05} that there are infinitely many solutions satisfying~\eqref{eq:entro_i}. Additional entropy conditions has to be considered at the interface $\{x=0\}$. We refer to \cite{AMV05} and \cite{BKT09} for a detailed discussion on the possible choices of entropy conditions at the interface. According to Kaasschieter~\cite{Kaa99} and Adimurthi, Jaffr\'e and Veerappa Gowda \cite{AJV04}, the relevant entropy condition at the interface for two-phase flows with \emph{continuous capillary pressure field} is the so-called \emph{optimal entropy condition} introduced in \cite{AMV05}.
Assuming that both $f_i$ are convex, the optimal entropy solution is characterized as follows: the discontinuity at the interface between $u_1$ and $u_2$ can not be undercompressive:
\begin{equation}\label{eq:undercomp}
\min\left\{ 0, f_1'(u_1) \right\} \max\left\{ 0, f_2'(u_2) \right\} = 0.
\end{equation}
Following the idea of Audusse and Perthame \cite{AP05}, the entropy condition at the interface can be derived by comparing the solution to steady states. 
Denoting by 
$\tilde{\k}(x) = \k_i$ if $x\in \O_1$, where $f_1(\k_1) = f_2(\k_2)$, then $\tilde\k$ has to satisfy the relation
$$
\min\left\{ 0, f_1'(\k_1) \right\} \max\left\{ 0, f_2'(\k_2) \right\} = 0.
$$
We denote by $\k_{\rm opt}(x)$ be the steady state corresponding to the optimal entropy connection appearing  in the work of Adimurthi, Mishra and Veerappa Gowda \cite{AMV05}, described on Figure~\ref{fig:optimal_connection} and defined as follows: 
\begin{itemize}
\item if $f_1(b_1) \le f_2(b_2)$, then 
\be\label{eq:k_opt_1}
\k_{\rm opt}(x) = \left\{ \begin{array}{l}
b_2 \textrm{ if }x>0, \\
\underline{b}_2=\min\left\{ \nu \ |Ê\ f_1(\nu) = f_2(b_2) \right\} \textrm{ if }Êx<0;
\end{array}\right.
\ee
\item if $f_1(b_1) \ge f_2(b_2)$, then 
\be\label{eq:k_opt_2}
\k_{\rm opt}(x) = \left\{ \begin{array}{l}
b_1 \textrm{ if }x<0, \\
\overline{b}_1= \max\left\{ \nu \ |Ê\ f_2(\nu) = f_1(b_1) \right\} \textrm{ if }Êx>0.
\end{array}\right.
\ee
\end{itemize}
It is easy to check that in both cases, the function $(x,t)\mapsto \k_{\rm opt}(x)$ is a steady entropy solution in the sense of Definition~\ref{entro_Pih_def}.
\begin{figure}[htb]
\begin{center}
\resizebox{5cm}{!}{\input{f12.pstex_t}}
\hskip 1cm
\resizebox{5cm}{!}{\input{f15.pstex_t}}
\end{center}
\caption{
We exhibit particular steady states, which are so called \emph{optimal connections} in the work of Adimurthi, Mishra and Veerappa Gowda \cite{AMV05}.
The case (a) corresponds to the optimal connection \eqref{eq:k_opt_1} while the case (b) corresponds to  the optimal connection~\eqref{eq:k_opt_2}.
}
\label{fig:optimal_connection}
\end{figure}
According to \cite{AMV05} and \cite{BKT09}, the condition~\eqref{eq:undercomp} can be replaced by the formulation: $\forall \psi \in \Dd^+(\R\times[0,T))$, 
\beqn
\lefteqn{\int_0^T \hspace{-5pt} \int_\R | u - \k_{\rm opt}| \partial_t \psi \,dxdt 
+ \int_\R | u_0 - \k_{\rm opt}| \psi(\cdot,0) \, dx} \nn\\
&&+ \int_0^T \hspace{-5pt} \sum_{i=1,2} \int_{\O_i} \hspace{-5pt}\sign(u-\k_{\rm opt}) \left( f_i(u)- f_i(\k_{\rm opt})\right) \partial_x \psi\, dxdt \ge 0. \label{eq:comp_opt}
\eeqn
This interface entropy condition does not require anymore that both $f_i$ are convex to provide a $L^1$-contraction semi-group, as it will be stated in Theorem~\ref{th_Bac} and shown in \cite{BKT09}.
\vskip 5pt
\begin{definition}[entropy solution to \Pih]
\label{entro_Pih_def}
Let $u_0 \in L^\infty(\R)$ with $0\le u_0\le 1$, and let $T>0$. 
A function $u$ is said to be an entropy solution to \Pih~if
\begin{enumerate}
\item $u\in L^\infty(\R\times(0,T))$, $0\le u\le 1$ a.e.; 
\item for all $\psi\in \Dd^+(\R\times[0,T))$,
\be\label{eq:weak}
 \int_0^T {\hspace{-5pt}} \int_\R u \partial_t \psi\,dxdt + \int_\R u_0 \psi(\cdot, 0) dx
 + \sum_{i=1,2}  \int_0^T {\hspace{-5pt}} \int_{\O_i} f_i(u) \partial_x \psi \, dxdt =0;
\ee
\item for $i=1,2$, for all $\psi\in \Dd^+(\O_i\times[0,T))$, for all $\k\in [0,1]$, 
\beqn
&\ds \int_0^T {\hspace{-5pt}} \int_{\O_i} |u-\k| \partial_t \psi\, dxdt+ \int_{\O_i} |u_0-\k|\psi(\cdot,0) dx&\nn\\
&\ds +  \int_0^T {\hspace{-5pt}} \int_{\O_i} \sign(u-\k)(f_i(u)-f_i(\k))\partial_x\psi\, dxdt \ge 0;&\label{entro_Pih_for}\eeqn
\item the inequalities~\eqref{eq:comp_opt} hold.
\end{enumerate}
\end{definition}
\vskip 5pt
In the following theorem, we claim the existence and the uniqueness of the entropy solution to the problem~{\Pih}.
\vskip 5pt
\begin{theorem}[Existence and uniqueness of the entropy solution]\label{th_Bac}
Let $u_0 \in L^\infty(\R)$ with $0\le u_0\le 1$, and let $T>0$, then there exists a unique entropy 
solution to \Pih~in the sense of Definition~\ref{entro_Pih_def}. \\
Furthermore, the function $u$ can be supposed to belong to $C([0,T];L^1_{loc}(\R))$, 
and if $u,v$ are two entropy solutions associated to initial data $u_0,v_0$, then, for all 
$R>0$, the following comparison principle holds: $\forall t\in [0,T]$,
 $$
 \int_{-R}^R (u(x,t)-v(x,t))^\pm dx \le  \int_{-R-Ct}^{R+Ct} (u_0(x)-v_0(x))^\pm dx,
$$
where $C=\max_i(Lip(f_i))$, with $Lip(f_i)=\sup_{s\in(0,1)} \left| f_i'(s) \right|.$
\end{theorem}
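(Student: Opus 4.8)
The plan is to follow the classical Kružkov-type doubling-of-variables strategy, adapted to the discontinuous-flux setting by comparing both solutions to the family of steady states $\k_{\rm opt}$ and, more generally, to the piecewise-constant stationary profiles $\tilde\k$ appearing in item (4) of Definition~\ref{entro_Pih_def}. For existence, I would first construct a solution by vanishing viscosity or by a monotone finite-volume/Godunov scheme adapted to the interface coupling~\refe{trans_BL_i}; the uniform $L^\infty$ bound $0\le u\le 1$ comes from the structure $f_i(0)=0$, $f_i(1)=q$ together with~\refe{croissance_hyp2}, which makes $0$ and $1$ stationary sub/supersolutions. Compactness in $L^1_{loc}$ follows from a BV estimate away from the interface plus the strong-trace result of \cite{Pan07} invoked in the excerpt (using~\refe{Hyp:genuine}), and one checks in the limit that the weak formulation~\refe{eq:weak}, the interior entropy inequalities~\refe{entro_Pih_for}, and the optimal interface condition~\refe{eq:comp_opt} all pass to the limit. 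The time-continuity $u\in C([0,T];L^1_{loc}(\R))$ is then a standard consequence of the entropy inequalities.

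For uniqueness and the comparison principle, let $u,v$ be two entropy solutions with data $u_0,v_0$. I would double the variables, writing Kružkov's inequality in each subdomain $\O_i\times(0,T)$ for the pair $(u,v)$ with test functions supported away from $\{x=0\}$, which yields, after the usual diagonalization, the local Kato inequality
$$
\partial_t |u-v| + \partial_x\bigl(\sign(u-v)(f_i(u)-f_i(v))\bigr)\le 0 \quad \text{in } \Dd'(\O_i\times(0,T)).
$$
The heart of the matter is to control the flux of $|u-v|$ across the interface $\{x=0\}$, i.e.\ to show that for a.e.\ $t$
$$
\sign(u_1-v_1)(f_1(u_1)-f_1(v_1)) \ge \sign(u_2-v_2)(f_2(u_2)-f_2(v_2)),
$$
where $u_i,v_i$ are the strong traces. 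This is exactly where the optimal-connection inequality~\refe{eq:comp_opt} enters: testing~\refe{eq:comp_opt} for $u$ against the trace $v_i$ played off the steady state $\k_{\rm opt}$ (and symmetrically for $v$ against $u_i$), and using that along the optimal connection one of $f_1'(u_1)$, $f_2'(u_2)$ is nonnegative and the other nonpositive by~\refe{eq:undercomp}, one obtains the required interface dissipation inequality. Combining the interior Kato inequalities with this interface inequality and integrating against a cutoff travelling at speed $C=\max_i Lip(f_i)$ gives the finite-speed comparison estimate; taking $v_0=u_0$ yields uniqueness.

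The main obstacle I expect is precisely the interface-dissipation step: one must justify the existence of the strong traces $u_i,v_i$ (guaranteed under~\refe{Hyp:genuine} by~\cite{Pan07}, or under $u_0\in BV$ by Remark~\ref{rmk:trace}), and then carefully check that the admissible pairs of traces at $x=0$ — those compatible with the Rankine–Hugoniot relation $f_1(u_1)=f_2(u_2)$ and the optimal condition~\refe{eq:undercomp} — form a \emph{complete} ($L^1$-dissipative) germ in the sense of \cite{BKT09}, so that the Kato inequality survives the passage through the interface for \emph{all} pairs of trace values, not merely those that happen to coincide with some $\k_{\rm opt}$. The bookkeeping distinguishing the cases $f_1(b_1)\lessgtr f_2(b_2)$ and the several sign configurations of $f_i'$ at the traces is routine but delicate; once the germ is shown to be complete, uniqueness, the $L^1$-contraction, and the stated comparison principle follow by the now-standard machinery, and one may invoke Theorem~\ref{th_Bac} of \cite{BKT09} directly.
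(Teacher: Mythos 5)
Your outline matches the route the paper itself takes: the paper gives no self-contained proof of Theorem~\ref{th_Bac}, but attributes uniqueness and the $L^1$-contraction to a straightforward generalization of Theorem~3.1 of \cite{BKT09} (comparison with the optimal-connection steady states via the germ/adapted-entropy machinery you describe) and existence to the convergence of the Godunov-type scheme of \cite{AJV04} or the modified Engquist--Osher scheme of \cite{BKT09}. Your plan is therefore consistent with the paper's; like the paper, it defers the delicate interface-dissipation case analysis to those references rather than carrying it out.
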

\vskip 5pt
The uniqueness and the $L^1-$stability of the entropy solution stated above can be seen as a straightforward generalization of  Theorem 3.1 in \cite{BKT09} to the case where $q\neq 0$. The existence of such an entropy solution is provided in \cite{AJV04} by showing the convergence of  the discrete solution corresponding to a Godunov-type scheme, while a modified Engquist-Osher scheme is considered in~\cite{BKT09}.
\vskip 5pt

In the particular case where $f_i(u)= k_i u(1-u)$ for $k_1, k_2 \in \R_+$, then it is shown in \cite{SV03}~that entropy solutions can be obtained as the limit for $\mu\to 0$ and $\delta\to 0$ of the solutions $u^{\mu,\delta}$ to the problem 
$$
\partial_t u^{\mu,\delta} + \partial_x \left( k^\delta(x) u^{\mu,\delta} ( 1 -  u^{\mu,\delta} ) \right) = \mu \partial_{xx}  u^{\mu,\delta}
$$
where $k^\delta$ is a smooth approximation of the piecewise constant function defined by $k(x)=k_i$ if $x\in \O_i$.
\vskip 5pt
It has been proven in \cite{Kaa99} that the entropy solution can also be obtained as limit for $\eps$ tends to $0$ of weak solutions to regularized problems
$$
\partial_t u^\eps + \partial_x \left( f_i(u^\eps) -  \eps \lambda_i(u^\eps)\partial_x \pi_i(u^\eps) \right) = 0
$$
under the assumption that $\pi_1(0) = \pi_2(0)$ and $\pi_1(1) = \pi_2(1)$. This latter assumptions is relaxed in this paper and in \cite{NC_choc}. 
\vskip 5pt
The Godunov-type scheme proposed by Adimurthi, Jaffr\'e and Veerappa Gowda~\cite{AJV04} uses the fact that the fluxes are given by simple algebraic relations. Indeed, The flux $G_{\rm int}(u_1,u_2)=f_1(u_1) = f_2(u_2)$ at the interface corresponding to traces $u_1,u_2$ is given by:
\be\label{eq:Riemann_interf}
G_{\rm int}(u_1,u_2) = \max\left\{
G_1(u_1,1), G_2(0,u_2)\right\},
\ee
where $G_i$ is the Godunov solver corresponding to $f_i$, that is
$$
G_i(u,v) = \left\{ \begin{array}{lll}
\ds\min_{s\in[u,v]} f_i(s) & \textrm{ if }Ê& u\le v,\\
\ds\max_{s\in[v,u]} f_i(s) & \textrm{ if }Ê& u \ge v.
\end{array}\right.
$$
If the flux at the interface is given by
\be\label{eq:flux_int_1}
G_{\rm int}(u_1,u_2) = f_1(u_1) = G_1(u_1,1),
\ee
then the restriction $u_{|_{\O_1}}$ of $u$ to $\O_1$ is the unique entropy solution to 
\be\label{eq:Pgh}
\left\{\begin{array}{ll}
\partial_t u + \partial_x f_1(u) = 0 & \textrm{ in }\O_1\times(0,T),\\[5pt]
u(0,\cdot) = \gamma& \textrm{ in }(0,T),\\[5pt] 
u(\cdot,0) = u_0 &\textrm{ in }\O_1
\end{array}
\right.
\ee
corresponding to $\gamma=1$.
Recall that the trace on $\{x=0\}$ has to be understood in a weak sense (see \cite{BLN79,MNRR96}). 
Since the solution to~\eqref{eq:Pgh} depends in a non-decreasing way of the prescribed trace $\gamma$, 
we can claim that 
\be\label{eq:charac_1}
\textstyle u_{|_{\O_1}} = \sup_{\gamma\in L^\infty((0,T);[0,1])}\left\{Êv \textrm{ solution to }Ê\eqref{eq:Pgh}
\right\}.
\ee
Similarly, in the case where the flux at the interface is given by 
\be\label{eq:flux_int_2}
G_{\rm int}(u_1,u_2) = f_2(u_2) = G_2(0,u_2),
\ee
the restriction $u_{|_{\O_2}}$ of $u$ to $\O_2$ is characterized by 
\be\label{eq:charac_2}
\textstyle
u_{|_{\O_2}} = \inf_{\gamma\in L^\infty((0,T);[0,1])}\left\{Êv \textrm{ solution to }Ê\eqref{eq:Pdh}
\right\},\ee
where 
\be\label{eq:Pdh}
\left\{\begin{array}{ll}
\partial_t u + \partial_x f_2(u) = 0 & \textrm{ in }\O_2\times(0,T),\\[5pt]
u(0,\cdot) = \gamma& \textrm{ in }(0,T),\\[5pt] 
u(\cdot,0) = u_0 &\textrm{ in }\O_2
\end{array}
\right.
\ee
Since the problem is conservative, in both cases the solution $u$ is entropic in both $\O_i\times(0,T)$, i.e. it satisfies \eqref{eq:entro_i}, and 
minimizes the flux through the interface $\{x=0\}$. It is shown in \cite{NC_choc} (see also \cite{Paris09}) that this characterization still holds, 
but that the different physical assumptions lead to the selection of a solution to \Pih~which is not the entropy solution in the sense of Definition~\ref{entro_Pih_def}.

\subsection{heterogeneities involving discontinuous capillarities}
Let us now come back to the problem \refe{eq_tot_i}. 
Suppose for the sake of simplicity that the functions $\pi_i$ are smooth and increasing on $[0,1]$, 
and that $\l_i(u)>0$ if $0<u<1$. The problem is then a spatial coupling of two parabolic problems, and we 
will need to ask two transmission conditions: one for the trace, and one for the flux. Concerning 
the latter, the conservation of mass yields a relation analogous to \refe{trans_BL_i}, which can 
be written with rough notations:
\be\label{trans_flux}
f_1(u_1)- \l_1(u_1) \partial_x \pi_1(u_1)=f_2(u_2)- \l_2(u_2) \partial_x \pi_2(u_2).
\ee

Let us now focus on the trace condition at the interface. In the case of heterogeneous media, the capillary pressure can be discontinuous at the interface. Numerical schemes for simulating such flows has been proposed in \cite{FVbarriere,EEM06,EMS09,EEN98}. 
It has been shown independently in \cite{BLS09} and \cite{CGP09} 
(but see also \cite{BPvD03} and \cite{EEM06}) 
that the connection of the capillary pressures $\pi_i(u_i)$ has to be done in a graphical sense, 
so that phenomena like oil trapping can appear. Thus we have to define  the monotonous 
graphs $\t\pi_i$.
$$
\t\pi_i(u)=
\left\{
\begin{array}{ll}
\pi_i(u) & \textrm{if } 0<u<1,\\ 
(-\infty, \pi_i(0)] & \textrm{if } u=0,\\ 
\left[\pi_i(1), +\infty\right) & \textrm{if } u=1.
\end{array}
\right.
$$
It is shown in \cite{BLS09} and \cite{CGP09} that a natural way to connect the capillary pressures on the interface 
consists in asking:
\be\label{trans_p}
\t\pi_1(u_1)\cap\t\pi_2(u_2)\neq\emptyset.
\ee

In order to state a convenient definition for the solution of \Pip, we introduce the Kirchhoff transformation 
$\phii(u)=\int_0^u \l_i(s)\pi_i'(s)ds.$
\vskip 5pt
\begin{definition}[bounded flux solution]\label{bounded_def}
Let  $u_0\in L^\infty(\R)$, $0\le u_0 \le 1$, and let $T>0$. 
A function $u$ is said to be a bounded flux solution to \Pip~if it fulfills:
\begin{enumerate}
\item $u\in L^\infty(\R\times(0,T))$, $0\le u\le 1$ a.e., 
\item $\partial_x\phii(u)\in L^\infty(\O_i\times(0,T))$, 
\item $\t\pi_1(u_1)\cap\t\pi_2(u_2)\neq\emptyset$ for a.e. $t\in (0,T)$,
\item $\forall \psi \in \Dd(\R\times[0,T))$, 
\beqn
&\ds \int_0^T \int_\R u(x,t)\partial_t \psi(x,t)dxdt +  \int_\R u_0(x) \psi(x,0)dx&\nn \\
&\ds +  \int_0^T \sum_{i=1,2} \int_{\O_i} \left( f_i(u)(x,t)-\partial_x\phii(u)(x,t)
\right)\partial_x \psi(x,t)dxdt=0. &\label{bounded_for}
\eeqn
\end{enumerate}
\end{definition}
\vskip 5pt
The bounded flux solution are so called since the point 2 of Definition~\ref{bounded_def} 
ensures that the flux $f_i(u)-\partial_x \phii(u)$ remains uniformly bounded.
Such a condition will require assumptions on the initial data $u_0$, as it will be stated 
in the following theorem.
\vskip 5pt
\begin{theorem}[existence of a bounded flux solution]\label{bounded_existence}
Let $f_1,f_2$ be Lipschitz continuous functions, and $\varphi_1, \varphi_2$ 
be increasing Lipschitz continuous functions. 
Let $u_0\in L^1(\R)$, $0\le u_0 \le 1$ fulfilling $\partial_x \phii(u_0)\in L^\infty(\O_i)$, 
and $\t\pi_1(u_{0,1})\cap\t\pi_2(u_{0,2})\neq\emptyset$, where $u_{0,i}$ denotes the trace 
on $\{x=0\}$ of ${u_0}_{|\O_i}$. Then there exists a bounded flux solution. 
Moreover, $u$ belongs to $\Cc([0,T];L^1(\R))$.
\end{theorem}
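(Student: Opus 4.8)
The natural strategy is to construct the bounded flux solution as a limit of solutions to non-degenerate approximate problems, using the Kirchhoff transform $\varphi_i$ to turn the spatial coupling of two parabolic equations into something with a well-behaved diffusive structure, and then to pass to the limit using compactness estimates. First I would regularize: replace $\lambda_i$ by $\lambda_i^\eta = \lambda_i + \eta$ (so the equations become uniformly parabolic inside each $\Omega_i$), truncate the domain to a bounded interval $(-R,R)$ with, say, homogeneous Neumann conditions at $x=\pm R$, and smooth the initial datum. On the bounded domain with non-degenerate diffusion, existence of a weak solution $u_{\eta,R}$ together with the transmission conditions \eqref{trans_flux}–\eqref{trans_p} is classical — one can obtain it by a Galerkin or fixed-point argument, or by invoking the elliptic-in-each-subdomain theory after an implicit time discretization, with \eqref{trans_p} encoding the maximal-monotone-graph coupling of the Kirchhoff variables across $x=0$.

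**Key estimates.** The crucial a priori bounds, all uniform in $\eta$ and $R$, are: (i) $0\le u_{\eta,R}\le 1$ by the maximum principle, using $\lambda_i(0)=\lambda_i(1)=0$ and $f_i(0),f_i(1)$ fixed, so the constants $0$ and $1$ are super/sub-solutions; (ii) an $L^1(\R)$ bound and an $L^1$-contraction / finite speed of propagation estimate, giving tightness and the eventual $\mathcal{C}([0,T];L^1(\R))$ regularity; (iii) the \emph{bounded flux} estimate, which is the heart of the matter — one differentiates the equation in $x$ (or, more robustly, uses the equation to compare $\partial_x(f_i(u)-\partial_x\varphi_i(u))$ with its value at $t=0$) to show that $\|\partial_x\varphi_i(u_{\eta,R})\|_{L^\infty(\Omega_i\times(0,T))}$ stays controlled by $\|\partial_x\varphi_i(u_0)\|_{L^\infty(\Omega_i)}$ and $\mathrm{Lip}(f_i)$. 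This is exactly where the hypothesis $\partial_x\varphi_i(u_0)\in L^\infty$ and the compatibility condition $\tilde\pi_1(u_{0,1})\cap\tilde\pi_2(u_{0,2})\neq\emptyset$ are consumed: the latter guarantees that no spurious boundary layer in the flux is created at $x=0$ at the initial time, so the flux bound survives the interface coupling. I would also extract an $L^2_t H^1_x$-type bound on $\varphi_i(u_{\eta,R})$ by testing against $\varphi_i(u)$ itself, and a bound on $\partial_t u$ in a negative norm from the equation, to get strong $L^1_{loc}$ compactness via Aubin–Simon (or Kolmogorov in space plus the flux bound in time).

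**Passage to the limit.** With these bounds, I let $R\to\infty$ first (or jointly with $\eta$) to recover the whole line, then $\eta\to 0$. Strong $L^1_{loc}$ convergence of $u_{\eta,R}$ to some $u$, together with continuity of $f_i$ and $\varphi_i$, passes to the limit in the weak formulation \eqref{bounded_for}; the uniform flux bound (iii) is preserved by weak-$\ast$ lower semicontinuity, giving point 2 of Definition~\ref{bounded_def}, and it simultaneously provides enough regularity (essentially $\varphi_i(u)\in W^{1,\infty}_x$, hence traces $u_i$ exist) to pass to the limit in the graph condition \eqref{trans_p}, using that the graphs $\tilde\pi_i$ are closed and that $u_{\eta,R}(\cdot,t)\to u(\cdot,t)$ a.e.\ near the interface for a.e.\ $t$. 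Finally, $\mathcal{C}([0,T];L^1(\R))$ membership follows from the uniform $L^1$-continuity-in-time estimate inherited from the contraction property.

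**Main obstacle.** The delicate point is estimate (iii), the uniform bound on $\partial_x\varphi_i(u)$, and in particular making it robust across the interface $x=0$ where the two Kirchhoff variables $\varphi_1(u_1)$ and $\varphi_2(u_2)$ need not match and only the flux \eqref{trans_flux} and the graph relation \eqref{trans_p} hold. One must show that the interface does not generate an unbounded flux; I expect this to require a careful analysis of the Riemann-type problem at $x=0$ for the regularized equations — controlling the sign of $\partial_x\varphi_i$ near $x=0^\pm$ using the orientation/monotonicity built into $\tilde\pi_i$ and the compatibility of $u_0$ — rather than a naive global differentiation of the PDE, which fails because the coefficients jump at $x=0$. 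Handling the degeneracy $\lambda_i\to 0$ at $u=0,1$ simultaneously with the interface coupling is the part that needs the most care.
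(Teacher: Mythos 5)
Your proposal is correct and follows essentially the same route as the paper, which itself only sketches the argument by citing \cite{FVbarriere} and \cite{CGP09}: the decisive ingredient in both cases is the maximum principle on the fluxes $f_i(u)-\partial_x\varphi_i(u)$ (the estimate \refe{max_flux}), obtained after regularization and passed to the limit by compactness, the flux continuity \refe{trans_flux} being what lets the maximum principle cross the interface. The only cosmetic difference is in the last step, where the paper gets $\Cc([0,T];L^1(\R))$ by combining an $L^\infty((0,T);L^1(\R))$ bound (from the test function $\psi=\min(1,(R-|x|)^+)$) with the $\Cc([0,T];L^1_{loc})$ regularity of \cite{Cont_L1}, whereas you invoke the $L^1$-contraction in time; both are standard and adequate.
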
 
\vskip 5pt
The first part of this theorem is a straightforward adaptation to the case of unbounded domains and 
non-monotonous $f_i$ of a result from \cite{FVbarriere} and \cite{CGP09} (see also \cite{Sch08}).
This is based on a maximum principle on the fluxes $(f_i(u)-\partial_x\phii(u))$. 
This particularly yields:
\be\label{max_flux}
\| f_i(u)-\partial_x \phii(u) \|_{L^\infty(\OiT)}\le \max_{j=1,2}\left(\| f_j(u_0)-\partial_x \varphi_j(u_0) \|_{L^\infty(\O_j)}
\right).
\ee
If $u_0\in L^1(\O)$, then choosing $\psi=\min(1,(1, R-|x|)^+)$ and letting $R$ tend to $\infty$ gives
$u\in L^\infty((0,T);L^1(\R))$. Moreover, thanks to \cite{Cont_L1}, $u$ 
can be supposed to belong 
to $\Cc([0,T];L^1_{loc}(\R))$. Then $u$ belongs to $\Cc([0,T];L^1(\R))$.
\vskip 5pt
The choice of bounded flux solutions instead of more classical weak solution with 
$\partial_x\phii(u)$ only belonging to $L^2((0,T);L^2_{loc}(\overline\O_i))$ has been motivated 
by the fact that it provides a comparison principle.
\vskip 5pt
\begin{proposition}\label{bounded_comp}
Let $u,v$ be two bounded flux solutions in the sense of Definition~\ref{bounded_def} 
associated to initial data $u_0,v_0$. Then, for all $\psi\in \Dd^+(\R\times[0,T))$, 
\beqn
&\ds \int_0^T \int_\R (u(x,t)-v(x,t))^\pm\partial_t \psi(x,t)dxdt +  \int_\R (u_0(x)-v_0(x))^\pm \psi(x,0)dx&\nn \\
&\ds +  \int_0^T \sum_{i=1,2} \int_{\O_i} \sign_\pm(u(x,t)-v(x,t)) \left( f_i(u)(x,t)-f_i(v)(x,t)\right)
\partial_x \psi(x,t)dxdt &\nn\\
&\ds -  \int_0^T \sum_{i=1,2} \int_{\O_i}  \partial_x(\phii(u)(x,t)-\phii(v)(x,t))^\pm \partial_x\psi(x,t)dxdt \ge 0.&
\label{bounded_comp_for}
\eeqn
\end{proposition}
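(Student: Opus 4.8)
The plan is to run a Kruzhkov doubling of variables, in the degenerate-parabolic form due to Carrillo, on each of the two half-lines $\O_1$ and $\O_2$ separately, and then to glue the two resulting local inequalities by showing that the terms they produce on the interface $\{x=0\}$ can be discarded. We treat only the superscript $+$, the case $-$ being obtained by exchanging the roles of $u$ and $v$. Fix $i$; applying~\refe{bounded_for} to $u=u(x,t)$ and to $v=v(y,s)$ against test functions of the form $\psi\!\left(\tfrac{x+y}{2},\tfrac{t+s}{2}\right)\rho_n(x-y)\rho_n(t-s)$, with $\psi\in\Dd^+(\O_i\times[0,T))$ and $\rho_n$ a standard mollifier, adding and letting $n\to\infty$, one obtains
\[
\int_0^T\!\!\int_{\O_i}(u-v)^+\partial_t\psi+\int_0^T\!\!\int_{\O_i}\sign_+(u-v)(f_i(u)-f_i(v))\partial_x\psi-\int_0^T\!\!\int_{\O_i}\partial_x(\phii(u)-\phii(v))^+\partial_x\psi\ \ge\ 0.
\]
The convective part is the classical Kruzhkov computation. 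The only delicate point is the degenerate diffusion term: since $\partial_x\phii(u),\partial_x\phii(v)\in L^\infty(\OiT)$ (point~2 of Definition~\ref{bounded_def}) and $\phii$ is increasing --- so that $\phii^{-1}$ is continuous and $u=v$ a.e.\ on $\{\phii(u)=\phii(v)\}$ --- Carrillo's regularization of $\sign_+$ applies and produces precisely the third term above, all remaining contributions of the doubling on $\{u=v\}$ vanishing in the limit.

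One next needs strong traces on the interface. Since $\partial_x\phii(u)\in L^\infty(\OiT)$, the map $x\mapsto\phii(u)(x,t)$ is Lipschitz for a.e.\ $t$, hence admits one-sided traces at $x=0$; composing with $\phii^{-1}$ gives strong traces $u_i(\cdot)$ of $u_{|\O_i}$ on $\{x=0\}$, and likewise $v_i(\cdot)$. Moreover, from $\partial_t u=-\partial_x F_i[u]$ in $\OiT$, where $F_i[u]:=f_i(u)-\partial_x\phii(u)\in L^\infty(\OiT)$, the flux $F_i[u]$ admits a trace at $x=0$, and testing~\refe{bounded_for} against functions supported near $\{x=0\}$ yields the transmission of the flux, $F_1[u]_{|0^-}=F_2[u]_{|0^+}$, and likewise for $v$.

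Running the doubling argument once more with $\psi\in\Dd^+(\R\times[0,T))$ not required to vanish on $\{x=0\}$, integrating by parts in each $\O_i$ while keeping the boundary contributions, and summing over $i$, one recovers the left-hand side of~\refe{bounded_comp_for} up to a remainder supported on $\{x=0\}$; using that at the interface $\partial_x(\phii(u)-\phii(v))^+$ coincides with $\sign_+(u_i-v_i)\,\partial_x(\phii(u)-\phii(v))$, together with the flux transmission of the previous paragraph, this remainder reads
\[
\mathcal I=\int_0^T\left(\sign_+(u_1-v_1)-\sign_+(u_2-v_2)\right)\Delta(t)\,\psi(0,t)\,dt,\qquad\Delta:=F_1[u]_{|0^-}-F_1[v]_{|0^-}=F_2[u]_{|0^+}-F_2[v]_{|0^+},
\]
and the proof closes once $\mathcal I$ is shown to have the correct sign, namely $\mathcal I\ge0$. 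This is exactly where the graphical transmission condition (point~3 of Definition~\ref{bounded_def}) enters: $\t\pi_1(u_1)\cap\t\pi_2(u_2)\neq\emptyset$ forces $(u_1,u_2)$ to lie on the non-decreasing curve $\{(\t\pi_1^{-1}(p),\t\pi_2^{-1}(p))\}_p$, and likewise $(v_1,v_2)$, so that $\sign_+(u_1-v_1)$ and $\sign_+(u_2-v_2)$ can differ only when the relevant trace equals $0$ or $1$ (the oil trapping regime), $\mathcal I$ vanishing identically outside these configurations. In the remaining cases one combines the flux transmission, the decomposition~\refe{decomp_f} of $f_i$, and the one-sided behavior of $x\mapsto(\phii(u)-\phii(v))(x,t)$ near $x=0$ forced by the position of the traces (for instance $\phii(u),\phii(v)\ge\phii(0)$ on the relevant side when the trace is $0$) to check that $\Delta$ carries the sign making the integrand non-negative.

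The routine parts are the first two paragraphs: the Kruzhkov--Carrillo machinery together with the bounded-flux regularity of Theorem~\ref{bounded_existence}, essentially the argument of~\cite{CGP09} adapted to unbounded domains and to non-monotone $f_i$. The genuine difficulty --- and the point where the structure of the problem, hence the very reason for working with \emph{bounded flux} solutions rather than merely weak ones, really enters --- is the control of the interface remainder $\mathcal I$: turning the monotone-graph connection of the capillary pressures and the conservation of the flux into the correct sign of $\mathcal I$, the delicate part being the case analysis at the degenerate saturations $0$ and $1$.
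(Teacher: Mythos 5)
Your overall architecture---a Carrillo-type doubling of variables in each $\O_i$ separately, existence of strong one-sided traces of the saturations and of the total fluxes at $x=0$, and a sign analysis of the interface remainder in which the monotone-graph connection of the capillary pressures excludes the crossed configurations---is the right one. (The paper itself states this proposition without proof, as an adaptation of the comparison results of the references it cites for Theorem~\ref{bounded_existence}; those references proceed exactly along these lines.) The genuine problem is in your identification of the remainder. You replace the one-sided trace of $\partial_x\bigl(\phii(u)-\phii(v)\bigr)^+$ at $x=0^\mp$ by $\sign_+(u_i-v_i)\,\partial_x\bigl(\phii(u)-\phii(v)\bigr)_{|0^\mp}$, and this identity fails precisely in the configurations that matter. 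Indeed, as you observe, the graph condition forces $\sign_+(u_1-v_1)$ and $\sign_+(u_2-v_2)$ to differ only when one pair of traces coincides; take for instance $u_1>v_1$ and $u_2=v_2$. Then $\sign_+(u_2-v_2)=0$ and your $\mathcal I$ reduces to $\int_0^T\Delta\,\psi(0,t)\,dt$ with, by flux transmission, $\Delta=-\partial_x\bigl(\varphi_2(u)-\varphi_2(v)\bigr)_{|0^+}$. Nothing in the flux conservation, in the decomposition~\eqref{decomp_f}, or in the one-sided bounds $\varphi_2(u),\varphi_2(v)\ge\varphi_2(0)$ gives a sign to this \emph{difference} of normal derivatives: one may perfectly have $u>v$ just inside $\O_2$ while $u_2=v_2=0$ (e.g.\ $v\equiv 0$ near the interface and $u$ positive there but vanishing at $0^+$), in which case $\Delta<0$. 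So the closing step of your plan, ``check that $\Delta$ carries the sign making the integrand non-negative'', cannot be carried out: $\Delta$ has no sign in these cases.

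The repair is to keep the positive part all the way to the interface. Writing $g_i=\phii(u)-\phii(v)$, on the side where the traces coincide one has $g_i(0^\mp)=0$, so the relevant trace of $\partial_x g_i^+$ is
$$
w_2=\lim_{\eta\to0^+}\frac{g_2^+(\eta)}{\eta}=\bigl(\partial_xg_2(0^+)\bigr)^+\ \ge 0
\qquad\textrm{from the right,}\qquad
w_1=-\lim_{\eta\to0^+}\frac{g_1^+(-\eta)}{\eta}\ \le 0
\qquad\textrm{from the left;}
$$
these are signed \emph{automatically}, whereas $\sign_+(u_i-v_i)\,\partial_xg_i(0^\mp)=0$ is not what the boundary term produces. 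In the configuration above the correct interface contribution is $\Delta+w_2=-\partial_xg_2(0^+)+\bigl(\partial_xg_2(0^+)\bigr)^+=\max\bigl\{-\partial_xg_2(0^+),0\bigr\}\ge0$, not $\Delta$. With this correction (and its mirror image when $u_1=v_1$, $u_2>v_2$), every configuration of traces permitted by the graph condition yields a non-negative integrand, the only genuinely unsigned cases being $u_1>v_1,\ u_2<v_2$ and $u_1<v_1,\ u_2>v_2$, which the monotone connection of Definition~\ref{bounded_def} excludes exactly as you argue. The remaining ingredients of your sketch (the Kru\v{z}kov--Carrillo doubling in each subdomain, and the bounded-flux regularity giving $L^\infty$ normal traces of $\partial_x\phii(u)$ and of the total flux) are sound.
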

\vskip 5pt
This proposition is not sufficient to claim the uniqueness, but it will be very useful in the sequel.
In order to obtain a uniqueness result, we have to ask furthermore that the initial data belongs to $L^1(\R)$.
\vskip 5pt
\begin{theorem}[uniqueness of bounded flux solution]\label{bounded_unicite}\!
Let $u_0\in L^1(\R)$, $0\le u_0 \le 1$ a.e., with $\partial_x \phii(u_0)\in L^\infty(\O_i)$ and 
$\t\pi_1(u_{0,1}) \cap \t\pi_2 (u_{0,2}) \neq \emptyset $. 
Then there exists a unique bounded flux solution $u\in \Cc([0,T];L^1(\R))$ in the sense of 
definition \ref{bounded_def}.
\end{theorem}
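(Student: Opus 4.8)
The plan is to obtain uniqueness as a direct consequence of the comparison inequality~\refe{bounded_comp_for} of Proposition~\ref{bounded_comp}, the whole difficulty being to dispose of the capillary term $-\sum_i\int_{\O_i}\partial_x(\phii(u)-\phii(v))^\pm\,\partial_x\psi$ by a good choice of test function, letting it tend to a cut-off of all of $\R$. First I would place the problem in an $L^1$ setting: exactly as in the comments following Theorem~\ref{bounded_existence}, since $u_0\in L^1(\R)$ any bounded flux solution satisfies $u\in L^\infty((0,T);L^1(\R))$ --- test~\refe{bounded_for} with $\psi(x,t)=\zeta(t)\min(1,(R-|x|)^+)$, use that the fluxes $f_i(u)-\partial_x\phii(u)$ are bounded in $L^\infty(\OiT)$ by point~2 of Definition~\ref{bounded_def}, and let $R\to\infty$ --- and the time-continuity result of~\cite{Cont_L1} then gives $u\in\Cc([0,T];L^1(\R))$. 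Hence both solutions $u,v$ lie in $\Cc([0,T];L^1(\R))$, the family $\{(u-v)(\cdot,t)\}_{t\in[0,T]}$ is relatively compact in $L^1(\R)$, hence uniformly integrable, and since each $\phii$ is Lipschitz with $\phii(0)=0$, the nonnegative quantities $(u-v)^\pm$ and $(\phii(u)-\phii(v))^\pm$ inherit the same uniform integrability.

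Next I take in~\refe{bounded_comp_for} the test function $\psi(x,t)=\zeta(t)\chi_R(x)$, where $\zeta\in\Dd^+([0,T))$ and $\chi_R\in\Dd^+(\R)$ is chosen with $0\le\chi_R\le1$, $\chi_R\equiv1$ on $[-R,R]$, $\supp\chi_R\subset[-2R,2R]$, $|\chi_R'|\le C/R$ and $|\chi_R''|\le C/R^2$; the essential feature is that $\chi_R'$ vanishes near $x=0$. Since $\partial_x\phii(u),\partial_x\phii(v)\in L^\infty(\OiT)$, for a.e.\ $t$ the function $x\mapsto(\phii(u)-\phii(v))^\pm(x,t)$ is Lipschitz on each $\overline{\O_i}$, so an integration by parts in $x$ on each half-line is licit and yields
$$-\sum_i\int_0^T\!\!\int_{\O_i}\partial_x(\phii(u)-\phii(v))^\pm\,\partial_x\psi\,dxdt=\sum_i\int_0^T\!\!\int_{\O_i}(\phii(u)-\phii(v))^\pm\,\zeta\,\chi_R''\,dxdt,$$
the boundary terms at $x=0$ disappearing because $\chi_R'(0)=0$ and those at $x=\pm\infty$ because $\psi$ has compact support. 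The right-hand side is bounded by $\tfrac{C}{R^2}\|\zeta\|_{L^\infty}T\sup_t\!\int_{|x|\ge R}(\phii(u)-\phii(v))^\pm dx$, which tends to $0$ as $R\to\infty$ by uniform integrability; likewise the convection term $\sum_i\int_0^T\int_{\O_i}\sign_\pm(u-v)(f_i(u)-f_i(v))\zeta\chi_R'$ is bounded by $\tfrac{C}{R}\|\zeta\|_{L^\infty}T\max_i Lip(f_i)\sup_t\!\int_{|x|\ge R}|u-v|\,dx\to0$.

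Letting $R\to\infty$ in~\refe{bounded_comp_for}, dominated convergence leaves
$$\int_0^T\Big(\int_\R(u-v)^\pm dx\Big)\zeta'(t)\,dt+\Big(\int_\R(u_0-v_0)^\pm dx\Big)\zeta(0)\ge0\qquad\forall\,\zeta\in\Dd^+([0,T)).$$
Taking $u_0=v_0$ and setting $g^\pm(t)=\|(u(\cdot,t)-v(\cdot,t))^\pm\|_{L^1(\R)}$ --- nonnegative, continuous on $[0,T]$ by the first step, and with $g^\pm(0)=0$ --- this inequality says that $(g^\pm)'\le0$ in $\Dd'(0,T)$, so $g^\pm$ is nonincreasing, whence $0\le g^\pm(t)\le g^\pm(0)=0$ for all $t$. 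Applying this to both signs gives $u=v$ a.e., which together with Theorem~\ref{bounded_existence} yields the asserted existence and uniqueness of a bounded flux solution in $\Cc([0,T];L^1(\R))$.

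I expect the main obstacle to be precisely the capillary term of~\refe{bounded_comp_for}: a priori it is not signed and it couples the two subdomains through the interface $\{x=0\}$. It is tamed by the conjunction of (i) the \emph{bounded flux} regularity $\partial_x\phii(u)\in L^\infty$, which makes $(\phii(u)-\phii(v))^\pm$ Lipschitz in space so that the integration by parts above is legitimate and no interface trace of $\phii(u)$ is needed, and (ii) the use of cut-offs that are flat near $x=0$, which annihilates the interface boundary terms and relegates the remaining mass to $\{|x|\sim R\}$, where the $L^1$-integrability coming from $u_0\in L^1(\R)$ makes it vanish. This also explains why $u_0\in L^1(\R)$ --- and not merely $u_0\in L^\infty(\R)$ --- is assumed, and why Proposition~\ref{bounded_comp} alone does not give uniqueness.
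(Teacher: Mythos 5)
Your proof is correct and follows essentially the same route as the paper, which also deduces the global $L^1$-comparison principle \refe{comp2} from Proposition~\ref{bounded_comp} by taking a cut-off test function $\psi=\min(1,(R-|x|)^+)$ and letting $R\to\infty$, the capillary and convection remainders vanishing for the same reasons you give ($u_0\in L^1$ plus the bounded-flux regularity). Your version merely spells out the details (flat cut-off near the interface, integration by parts, uniform integrability) that the paper leaves implicit.
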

\vskip 5pt
This theorem is a straightforward consequence Proposition~\ref{bounded_comp}. 
Indeed, choosing $\psi=\min(1,(1, R-|x|)^+)$ in \refe{bounded_comp_for}, and letting 
$\R$ tend to $+\infty$ gives the comparison principle: 
$\forall t\in[0,T]$,
\be\label{comp2}
\int_\R (u(x,t)-v(x,t))^\pm dx \le \int_\R (u_0(x)-v_0(x))^\pm dx.
\ee
The uniqueness result follows.
\subsection{capillary pressure independent of the saturation}
In some cases, the dependence of the capillary pressure $\pi_i$ with respect to the saturation seems to be weak, and some numerical simulation consider capillary pressures only depending on the porous medium, but not on the saturation. More precisely, we aim to consider graphs of capillary pressure on the form
\be\label{graph_C}
\t\pi_i(u)=
\left\{
\begin{array}{ll}
P_i & \textrm{if } 0<u<1,\\ 
(-\infty, P_i] & \textrm{if } u=0,\\ 
\left[P_i, +\infty\right) & \textrm{if } u=1,
\end{array}
\right.
\ee
so that the capillary pressure would roughly speaking not depend on $u$. 
\vskip 5pt
If one considers an interface $\{ x =0 \}$ between two $\O_i$, where the $\t\pi_i$ are on the form \refe{graph_C},
we can give an orientation to the interface: the interface is said to be positively oriented if $P_1>P_2$, and 
negatively oriented if $P_1<P_2$. A positively oriented interface involve positive capillary forces, and 
a negatively oriented involve positive capillary forces. The gravity effects are also oriented 
by the sign of $(\rho_w-\rho_o){g}$ in \refe{decomp_f}.
We have to make the assumption that 
\vskip 5pt
\begin{center}
\em ``either the gravity effects and the interface are oriented in the same way, or the 
convective effects are larger than the gravity effects. 
''
\end{center}
\vskip 5pt
Since we have supposed that gravity works in the sense of decaying $x$, we assume in the sequel that 
\be\label{eq:orientation_interface}
P_1 < P_2.
\ee

We build a family of approximate problems \refe{Pe} 
taking into account the capillary pressure: one suppose that 
$\pi_i^\eps(u)=P_i+\eps u$, where $P_i$ is a constant depending only on the homogeneous subdomain
$\O_i$. 
In fact, any $\pi_i^\eps$ converging uniformly to $P_i$ on $[0,1]$ and 
such that $u\mapsto \int_0^u \l_i(s) \left(\pi_i^\eps\right)'(s) ds $ converges uniformly toward $0$ would fit.

\vskip 5pt
Up to a smoothing of the initial data, we obtain a 
resulting  sequence ${(\ue)}_\eps$ 
of bounded flux solutions for a problem of type \Pip. We will show that under Assumptions~\refe{croissance_hyp2}-\eqref{Hyp:genuine}, this sequence tends almost everywhere to the unique entropy solution to \Pih.

\vskip 5pt 
This result has to be compared to the one presented in the associated paper \cite{NC_choc}, where it is shown that if Assumption~\eqref{eq:orientation_interface} does not hold, non-classical shock can occur at the interface, representing \emph{oil-trapping}.

\subsection{organization of the paper}
The paper is organized as follow: section~\ref{Pe_section} is devoted to the study of the approximate 
problem \refe{Pe}. We first smooth the initial data in a convenient way, and then we give a 
$L^2((0,T);H^1(\O_i))$-estimate on the approximate solutions that shows in particular that 
if the approximate solution~$\ue$ converges almost every where towards a function $u$, then $u$ satisfies the points 1,2 and 3 of Definition~\ref{entro_Pih_def}. In order to prove that $\ue$ converges 
almost everywhere, we derive a family $BV$-estimates.  
In order to check that the last point of Definition~\ref{entro_Pih_def} is fulfilled by the limit $u$ of the approximate solutions $\left(\ue\right)_\eps$

%
%
\section{The approximate problems}\label{Pe_section}
In this section we will define the approximate problem \refe{Pe}, and its solution $\ue$. 
We will state a $L^2((0,T);H^1_{loc}(\overline{\O}_i))$-estimate and a family of $BV$-estimates, 
which will be the key points of the proof of convergence of $\ue$ toward a weak solution of 
the problem~\refe{Pe}.
\vskip 5pt
In order to recover a family of entropy inequalities, we will build some steady solutions 
$\k^\eps$ to the problem~\refe{Pe}, and study their limit as $\eps\to 0$. 
This last point will require strongly Assumption~\refe{croissance_hyp2}.

\subsection{smoothing the initial data}
As it has already been stressed in Theorem~\ref{bounded_existence}, we need to assume some 
regularity on the initial data to ensure the existence of a bounded flux solution to problems 
of the type~\Pip.\\
Let $u_0$ belong to $L^\infty(\R)$, with $0\le u_0 \le 1$, we will build a family 
$\left( u_0^\eps \right)_\eps$ of convenient approximate initial data.
\vskip 5pt
\begin{lemma}\label{init_lem}
Let $u_0\in L^\infty(\R)$, $0\le u_0 \le 1$, then there exists a family $\left(u_0^\eps\right)_\eps$ 
of approximate initial data such that:
\begin{itemize}
\item $u_0^\eps \in C_c^\infty(\R^\star)$, 
$0\le u_0^\eps \le 1$, 
\item $u_0^\eps \to u_0$ a.e. in $\R$, $\| \eps \partial_x u_0^\eps \|_\infty \to 0$ as $\eps \to 0$, and  
$\| \eps\partial_x u_0^\eps \|_\infty \le 1$ for all $\eps>0$,
\item If $u_0\in BV(\R)$, then $\| \partial_x u_0^\eps \|_{L^1(\R)} \le  TV(u_0) +4.$
\end{itemize}
\end{lemma}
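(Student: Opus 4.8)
The plan is to construct $u_0^\eps$ in two stages: first regularize $u_0$ by mollification (with a carefully chosen mollification parameter depending on $\eps$), then cut off near spatial infinity and, crucially, near the interface $\{x=0\}$ so that the resulting function lies in $C_c^\infty(\R^\star)$ rather than merely $C_c^\infty(\R)$. Let $\rho_\delta$ be a standard mollifier at scale $\delta$. Set $w_0^\delta = \rho_\delta * u_0$; then $0\le w_0^\delta\le 1$, $w_0^\delta\to u_0$ a.e. (and in $L^1_{loc}$) as $\delta\to 0$, and $\|\partial_x w_0^\delta\|_\infty \le C/\delta$, while if $u_0\in BV(\R)$ one has the sharper bound $\|\partial_x w_0^\delta\|_{L^1(\R)}\le TV(u_0)$ by the standard estimate for mollified $BV$ functions. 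The key quantitative point is to pick $\delta = \delta(\eps)$ with $\delta(\eps)\to 0$ slowly enough that $\eps/\delta(\eps)\to 0$; for instance $\delta(\eps)=\sqrt\eps$ gives $\|\eps\partial_x w_0^{\delta(\eps)}\|_\infty \le C\sqrt\eps \to 0$, and for $\eps$ small this is $\le 1$. (For the finitely many large $\eps$ one adjusts $\delta$ or simply takes $u_0^\eps\equiv 0$ there, which is harmless since only the $\eps\to 0$ limit matters.)

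Next I would handle the truncation. Choose a cutoff $\theta_\eps\in C^\infty_c(\R^\star)$ with $0\le\theta_\eps\le 1$, equal to $1$ on $\{\,\eta_\eps\le |x|\le 1/\eps\,\}$ and supported in $\{\,\eta_\eps/2\le |x|\le 2/\eps\,\}$, where the inner scale $\eta_\eps\to 0$ is chosen so small that the extra total variation it introduces is controlled — this is where the ``$+4$'' in the third bullet comes from: the outer cutoff near $\pm 1/\eps$ contributes at most $2$ to $\|\partial_x(\cdot)\|_{L^1}$ (the function is between $0$ and $1$, and it is cut off on two far-apart regions, one per end), and the inner cutoff near $x=0$ contributes at most another $2$. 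Set $u_0^\eps = \theta_\eps\, w_0^{\delta(\eps)}$. By the product rule $\partial_x u_0^\eps = \theta_\eps\partial_x w_0^{\delta(\eps)} + w_0^{\delta(\eps)}\partial_x\theta_\eps$; in the $L^\infty$ bound we need $\eps\|\partial_x\theta_\eps\|_\infty$ small, which forces the cutoff's transition widths (both near $0$ and near infinity) to be $\gg\eps$, so one should take e.g. $\eta_\eps=\sqrt\eps$ as well and transition widths $\sim\sqrt\eps$. The first two bullets then follow: $u_0^\eps\in C_c^\infty(\R^\star)$, $0\le u_0^\eps\le 1$, $u_0^\eps\to u_0$ a.e., $\|\eps\partial_x u_0^\eps\|_\infty\to 0$, and $\le 1$ for all $\eps$.

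For the third bullet, assuming $u_0\in BV(\R)$: $\|\partial_x u_0^\eps\|_{L^1(\R)} \le \|\theta_\eps\partial_x w_0^{\delta(\eps)}\|_{L^1} + \|w_0^{\delta(\eps)}\partial_x\theta_\eps\|_{L^1} \le TV(u_0) + \|\partial_x\theta_\eps\|_{L^1(\R)}$, and a cutoff of the type described that goes $0\to 1\to 0$ across the two ends and is additionally pushed to $0$ on the two sides of the origin has $\|\partial_x\theta_\eps\|_{L^1}\le 4$. The main obstacle — really the only subtlety — is the simultaneous reconciliation of the requirement $u_0^\eps\in C^\infty_c(\R^\star)$ (which demands a cutoff near $x=0$ whose gradient is necessarily large, of order $1/\eta_\eps$) with the requirement $\|\eps\partial_x u_0^\eps\|_\infty\to 0$ (which caps that gradient at $o(1/\eps)$) and with the $BV$ bound $TV(u_0)+4$ (which caps the \emph{total} variation contributed by the cutoff). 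All three are met simultaneously by the choice $\eta_\eps\to 0$ with $\eta_\eps/\eps\to\infty$, e.g. $\eta_\eps=\sqrt\eps$; one just has to check the inequalities hold for all $\eps>0$, adjusting the construction trivially for the bounded range of $\eps$ where asymptotics do not yet bite.
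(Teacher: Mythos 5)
Your construction is correct and is essentially the same as the paper's: mollify and truncate away from $x=0$ and from infinity, with the mollification/cutoff scales decoupled from $\eps$ so that $\eps\|\partial_x u_0^\eps\|_\infty\to 0$, and with the ``$+4$'' coming from four unit-height transitions of the cutoff. The only (cosmetic) differences are that the paper truncates sharply \emph{before} mollifying and then defines $\eps$ as a function of the mollification parameter $\a$ (namely $\eps=\min(\a,\min(1,\sqrt\a)/\|\partial_x v^\a\|_\infty)$, which sidesteps the explicit $C/\delta$ gradient estimate), whereas you fix $\delta(\eps)=\eta_\eps=\sqrt\eps$ up front and multiply by a smooth cutoff afterwards.
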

\vskip 5pt
\begin{proof}
Let $\a>0$, and let $\rho_\a$ be a mollifier with support in $(-\a,\a)$. 
Let 
$v^\a=\left(u_0\circ\chi_{\a<|x|<1/\a}\right)\star \rho_\a$, then it is clear that 
$v^\a\in \Cc_c^\infty(\R^\star)$, and that $v^\a\to u_0$ a.e. in $\R$ as $\a\to 0$.
Choosing $\eps=\min\left(\a, \frac{\min(1,\sqrt{\alpha})}{\|\partial_x v^\a \|_\infty}  \right)$,
and $u_0^\eps=v^\a$
ends the proof of Lemma~\ref{init_lem}.
\end{proof}
\subsection{the problem \refe{Pe}}
Let $P_1, P_2 \in \R$, 
we define the functions $\pi_i^\eps$ by $\pi_i^\eps(u)=P_i+\eps u$, and 
$$ \t\pi_i^\eps (u)=\left\{
\begin{array}{ll}
P_i + \eps u & \textrm{if } 0<u<1,\\ 
(-\infty, P_i] & \textrm{if } u=0,\\ 
\left[ P_i+\eps, +\infty\right) & \textrm{if } u=1.
\end{array}
\right.
$$ 
\begin{figure}[htb]
\begin{center}
\resizebox{6cm}{!}{
\input{graphe_pression.pstex_t}
}
\label{graphe_pression}
\caption{capillary pressures graphs $\t\pi_i^\eps$.}
\end{center}
\end{figure}
If $\eps$ is small, the intersection of the ranges of the functions $\pi_1^\eps$ and $\pi_2^\eps$ is empty, 
and then the graphical relation $\t\pi_1(u_1)\cap \t\pi_2(u_2) \neq \emptyset$ connecting the capillary 
pressures at the interface becomes 
$$
(1-u_1) u_2 =0.
$$

Let $0\le u_0 \le 1$, and  let $\left(\ue_0\right)_\eps$ be built as in Lemma \ref{init_lem},
let $\phii(u)=\int_0^u \l_i(s) ds$.
 The approximate problem is: $\textrm{find } u^\eps \textrm{ s.t.}$
\be\label{Pe}\tag{$\mathcal{P}^\eps$}
\left\{\begin{array}{ll}
\ds \partial_t \ue +\partial_ x\left( f_i(\ue)-\eps \partial_x \phii(\ue)    \right)=0 & \textrm{in } \OiT,\!\!\!\!\!\!\!\!\!\!\!\!\!\!\!
\\
\ds  \t\pi_1^\eps (\ue)(0^-,t)\cap  \t\pi_2^\eps (\ue)(0^+,t) \neq \emptyset  & \textrm{in } (0,T),\\
\ds f_1(\ue)(0^-,t)-\eps \partial_x \varphi_1(\ue)(0^-,t)  =   
f_2(\ue_2)(0^+,t)-\eps \partial_x \varphi_2(\ue)(0^+,t) & \textrm{in } (0,T),\\
\ds \ue(0)=\ue_0 & \textrm{in }\R.
\end{array}\right.
\ee
This  problems is of type \Pip,  then the notion of bounded flux solution is a good frame to solve it.
\vskip 5pt
\begin{definition}[solution to \refe{Pe}] \label{sol_Pe_def}
A function $\ue$ is said to be a (bounded flux) solution to \refe{Pe} if it fulfills 
\begin{enumerate}
\item $\ue \in L^\infty(\R\times(0,T))$, $0\le \ue \le 1$ a.e.,
\item $\partial_x \phii(\ue)\in L^\infty(\O_i\times(0,T))$,
\item $\forall \psi\in \Dd(\R\times[0,T))$, 
\beqn
&\ds \int_0^T \int_\R \ue(x,t)\partial_t \psi(x,t)dxdt +  \int_\R \ue_0(x) \psi(x,0)dx&\nn \\
&\ds +  \int_0^T \sum_{i=1,2} \int_{\O_i} \left( f_i(\ue)(x,t)-\eps \partial_x\phii(\ue)(x,t)
\right)\partial_x \psi(x,t)dxdt=0. &\label{bounded_for_eps}
\eeqn
\end{enumerate}
\end{definition}
\vskip 5pt
We can use Theorem~\ref{bounded_existence} 
to claim that there exists a family $\left(\ue\right)_\eps$ of 
bounded flux solution to \refe{Pe} in the sense of Definition~\ref{sol_Pe_def}. 
Moreover, this family of solution fulfills,  thanks to \refe{max_flux} and Lemma~\ref{init_lem}: for all $\eps >0$,
\be\label{unif_1}
\| \eps \partial_x \phii(\ue) \|_\infty \le (\max_i(Lip(\phii))+\max_i\|f_i \|_{L^\infty(0,1)}). 
\ee
Since $\ue_0$ belongs to $L^1(\R)$, the solution $\ue$ is furthermore unique in $\Cc([0,T],L^1(\R))$ thanks 
to Theorem~\ref{bounded_unicite}. 

\subsection{a $L^2((0,T);H^1_{loc}(\overline\O_i))$-estimate}
All this subsection is devoted to prove the following estimate.
\vskip 5pt
\begin{proposition}\label{L2H1}
Let $K$ be a compact subset of $\overline\O_i$, and let $\ue$ be a solution of 
\refe{Pe} in the sense of Definition \ref{sol_Pe_def},  
then there exists $C$ depending only on $f_i, K,T$ (and not on $\eps$) such that
$$
\sqrt{\eps} \| \phii(\ue) \|_{L^2((0,T);H^1(K))} \le C.
$$
Particularly, this implies that $\eps\partial_x\phii(\ue)\rightarrow 0$ a.e. in $\OiT$ as $\eps\to 0$.
\end{proposition}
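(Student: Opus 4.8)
This is a classical parabolic energy estimate, the only genuinely new ingredient being the $\eps$-uniform bound on the fluxes across the interface. Fix $i\in\{1,2\}$ and a compact $K\subset\overline{\O_i}$, and choose $R>0$ with $K\subset\{|x|\le R\}\cap\overline{\O_i}$ together with a cut-off $\zeta\in C^\infty(\overline{\O_i})$ such that $0\le\zeta\le1$, $\zeta\equiv1$ on $K$ and $\supp\zeta\subset\{|x|\le R+1\}\cap\overline{\O_i}$; note that $\zeta$ need not vanish at $x=0$. I would introduce the convex function $\Phi_i(u)=\int_0^u\phii(s)\,ds$ (it is convex because $\phii'=\l_i\ge0$) and the function $G_i(u)=\int_0^u f_i(s)\l_i(s)\,ds$ — both bounded on $[0,1]$, since $f_i$, $\l_i$, hence $\phii$, are — and then test the equation $\partial_t\ue+\partial_x\big(f_i(\ue)-\eps\,\partial_x\phii(\ue)\big)=0$ (which holds in $\Dd'(\OiT)$) against $\zeta^2\,\phii(\ue)$, integrating over $\OiT$.

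In this computation, by the integration-by-parts-in-time identity valid for convex $\Phi_i$, the time term contributes $\int_{\O_i}\zeta^2\big(\Phi_i(\ue(\cdot,T))-\Phi_i(\ue_0)\big)\,dx$, bounded by $2\|\Phi_i\|_{L^\infty(0,1)}\,|\supp\zeta|$. Integrating the flux term by parts in $x$, writing $\partial_x(\zeta^2\phii(\ue))=2\zeta\zeta'\phii(\ue)+\zeta^2\partial_x\phii(\ue)$ and using the chain rule $f_i(\ue)\,\partial_x\phii(\ue)=\partial_x G_i(\ue)$, the diffusive contributions collapse to the good term $\eps\iint_{\OiT}\zeta^2\big(\partial_x\phii(\ue)\big)^2\,dx\,dt$, and what is left is: (i) the terms $\iint 2\zeta\zeta'\,(f_i(\ue)-\eps\partial_x\phii(\ue))\,\phii(\ue)$ and $\iint 2\zeta\zeta'\,G_i(\ue)$, bounded uniformly in $\eps$ because $0\le\ue\le1$, because $\zeta'$ is bounded with compact support, because $\phii$ and $G_i$ are bounded, and because the flux $f_i(\ue)-\eps\partial_x\phii(\ue)$ is bounded in $L^\infty(\OiT)$ uniformly in $\eps$ by \refe{unif_1}; and (ii) boundary terms at $x=0$, which are $\eps$-uniformly bounded combinations of the traces at $x=0^\mp$ of $(f_i(\ue)-\eps\partial_x\phii(\ue))\,\phii(\ue)$ and of $G_i(\ue)$, again controlled by \refe{unif_1} and the boundedness of $\phii$, $G_i$. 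Collecting these bounds yields $\eps\iint_{\OiT}\zeta^2(\partial_x\phii(\ue))^2\le C$ with $C=C(f_i,K,T)$; since $\zeta\equiv1$ on $K$ and $\eps\iint_{K\times(0,T)}\phii(\ue)^2\le\eps\,\|\phii\|^2_{L^\infty(0,1)}\,|K|\,T$, this gives $\sqrt\eps\,\|\phii(\ue)\|_{L^2((0,T);H^1(K))}\le C$. The last assertion follows since $\|\eps\partial_x\phii(\ue)\|_{L^2(K\times(0,T))}=\eps\,\|\partial_x\phii(\ue)\|_{L^2(K\times(0,T))}\le\sqrt\eps\,C\to0$ for every compact $K\subset\overline{\O_i}$, so $\eps\partial_x\phii(\ue)\to0$ in $L^2_{loc}(\OiT)$ and, along a subsequence, a.e.\ in $\OiT$.

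The two points that need care — and which I expect to be the only non-routine part — are the following. First, $\zeta^2\phii(\ue)$ is not an admissible test function: it is not smooth, and its support reaches the interface $x=0$, across which $\ue$ is generally discontinuous (the graphical capillary condition forces $(1-\ue(0^-))\,\ue(0^+)=0$); so the manipulations above must be made rigorous by first testing the equation in $\Dd'(\OiT)$ against $\zeta^2\,\chi_\delta\,\phii(\ue)$, with $\chi_\delta$ vanishing in a $\delta$-neighbourhood of $x=0$, and then letting $\delta\to0$ — the terms carrying $\chi_\delta'$ either tend to $0$ (those dominated by $\iint_{\delta<|x|<2\delta}|\partial_x\phii(\ue)|$, by absolute continuity of the integral, since $\partial_x\phii(\ue)\in L^\infty\subset L^1_{loc}$) or converge precisely to the interface boundary terms of (ii), which are meaningful because $\phii(\ue)(\cdot,t)\in H^1_{loc}(\overline{\O_i})$ for a.e.\ $t$ and the flux $f_i(\ue)-\eps\partial_x\phii(\ue)$ has a trace at $x=0$ (it lies in $L^\infty(\OiT)$ with $x$-derivative equal to $-\partial_t\ue$). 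Second, the time chain rule for the merely $L^\infty$ function $\ue$ with $\partial_t\ue\in L^2((0,T);H^{-1}_{loc})$ is the Alt--Luckhaus / Mignot--type lemma, whose validity rests precisely on the convexity of $\Phi_i$, i.e.\ on $\phii$ being non-decreasing — which is true here since $\l_i\ge0$. Apart from these, the proof is the textbook ``multiply by $\phii(\ue)$'' argument, the heterogeneity entering only through the uniformly bounded interface flux.
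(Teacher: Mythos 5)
Your argument is correct, but it takes a genuinely different technical route from the paper's. The paper does not test with $\zeta^2\phii(\ue)$: it first regularizes the problem by replacing $\phii$ with $\varphi_{i,n}(u)=\phii(u)+u/n$, so that $\varphi_{i,n}^{-1}$ is Lipschitz and the corresponding solution $\ue_n$ satisfies $\partial_x\ue_n\in L^\infty(\OiT)$, and then tests with $\ue_n\zeta^2$, $\zeta\in\Dd^+((a,b))$, $(a,b)\subset\O_i$. In that computation the time term is handled by the elementary identity $\big<\partial_t\ue_n\,|\,\ue_n\zeta^2\big>=\frac12\int\zeta^2\big(\ue_n(\cdot,T)^2-(u_0^\eps)^2\big)\,dx\ge-\frac12\|\zeta\|_\infty^2|b-a|$ (no Mignot/Alt--Luckhaus lemma is needed), the convection term is rewritten as $\iint\left[f_i(\ue_n)\ue_n-\Phi_i(\ue_n)\right]\partial_x\zeta^2$ with $\Phi_i$ a primitive of $f_i$, and the coercivity comes not from the choice of test function but from the inequality $\partial_x\varphi_{i,n}(\ue_n)\,\partial_x\ue_n\ge(\|\l_i\|_\infty+1/n)^{-1}\left(\partial_x\varphi_{i,n}(\ue_n)\right)^2$; the resulting bound is uniform in $n$ and passes to the limit $n\to\infty$ by compactness. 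Your version works directly on the degenerate problem via the Kirchhoff transform, which is precisely why you must invoke the nonlinear chain rule in time (convexity of $\Phi_i(u)=\int_0^u\phii(s)\,ds$) and discuss the interface traces explicitly. Each choice buys something: the added viscosity sidesteps both of your ``delicate points'' at the cost of an extra approximation-and-compactness layer (and the paper is admittedly terse where it extends the estimate from $\zeta\in\Dd^+((a,b))$ to $\zeta=\chi_{(a,b)}$ with an endpoint on the interface --- exactly the issue you treat carefully with the cutoff $\chi_\delta$), while your direct computation is shorter once the Alt--Luckhaus identity and the existence of a normal trace for the bounded flux are granted. Both proofs are ultimately driven by the same $\eps$-uniform $L^\infty$ flux bound \refe{unif_1}/\refe{max_flux}. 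One small point of rigor in your write-up: the identity $f_i(\ue)\partial_x\phii(\ue)=\partial_x G_i(\ue)$ should be justified as a Lipschitz chain rule applied to $\phii(\ue)$ (writing $G_i=\left(G_i\circ\phii^{-1}\right)\circ\phii$ and noting that $G_i\circ\phii^{-1}$ has Lipschitz constant $\|f_i\|_\infty$), since $\partial_x\ue$ itself need not exist where $\l_i$ degenerates.
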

\vskip 5pt
\begin{proof}
We fix $\eps>0$. 
Since the functions $\phii^{-1}$ are not Lipschitz continuous, 
the problem \refe{Pe} is not strictly parabolic, and the function $\ue$ is not a strong solution. 
In order to get more regularity on the approximate solution, we regularize the problem by adding 
an additional viscosity $1/n$ ($n\ge1$), so that the so built approximate solution $\ue_n$ 
is regular enough to perform the calculation below.
\vskip 5pt
Let $n>1$, and let $\varphi_{i,n}(u)=\phii(u)+u/n$, and let $\ue_n$ be a bounded flux solution of \refe{Pe} with $\varphi_{i,n}$ 
instead of $\phii$. From \refe{unif_1}, we know that $\partial_x\varphi_{i,n} (\ue_n)$ is uniformly bounded in 
$L^\infty(\OiT)$, and since $\varphi_{i,n}^{-1}$ is a Lipschitz continuous function, one has 
$\partial_x \ue_n \in L^\infty(\OiT)$. The following weak formulation holds: $\forall \psi \in \Dd(\O\times[0,T))$, 
\beqn
&\ds \int_0^T \int_\R \ue_n(x,t)\partial_t \psi(x,t)dxdt +  \int_\R \ue_0(x) \psi(x,0)dx&\nn \\
&\ds +  \int_0^T \sum_{i=1,2} \int_{\O_i} \left( f_i(\ue_n)(x,t)-\eps\partial_x\varphi_{i,n}(\ue_n)(x,t)
\right)\partial_x \psi(x,t)dxdt=0. &\label{bounded_for_en}
\eeqn
Let $(a,b)\subset \O_i$.
Let $\zeta \in \Dd^+((a,b))$, we deduce from \refe{bounded_for_en} that
\beqn
&\ds \Big<\partial_t\ue_n \ |\   \ue_n\zeta^2 {\Big>} = 
\int_0^T\!\!\!\!\int_a^b f_i(\ue_n) \partial_x( \ue_n \zeta^2)dxdt 
- \eps \int_0^T\!\!\!\!\int_a^b \partial_x\varphi_{i,n}(\ue_n) \partial_x(\ue_n\zeta^2 )dxdt,
&\label{NRJ_1}
\eeqn
where $\Big< \cdot | \cdot \Big>$ is the duality bracket between 
$L^2((0,T);H^{-1}(a,b))$ and $L^2((0,T);H^1_0(a,b)).$
Since $\varphi_{i,n}$ is a Lipschitz continuous function with $\left(\| \l_i \|_\infty + 1/n\right)$ as 
Lipschitz constant, one has
\be\label{NRJ_2}
\int_0^T\!\!\!\!\int_a^b \partial_x\varphi_{i,n}(\ue_n) \partial_x(\ue_n)\zeta^2 dxdt \ge 
\frac{1}{\| \l_i \|_\infty+1/n}\int_0^T\!\!\!\!\int_a^b \left(\partial_x\varphi_{i,n}(\ue_n)\right)^2 \zeta^2 dxdt.
\ee
Let $\Phi_i$ be a primitive of $f_i$, then:
\beqn
\int_0^T\!\!\!\!\int_a^b f_i(\ue_n)\partial_x(\ue_n  \zeta^2 ) dxdt &=& 
\int_0^T\!\!\!\!\int_U \partial_x \Phi_i(\ue_n) \zeta^2 dxdt 
+ \int_0^T\!\!\!\!\int_a^b f_i(\ue_n)\ue_n \partial_x \zeta^2 dxdt\nn\\
&=&\int_0^T\!\!\!\!\int_a^b \left[f_i(\ue_n)\ue_n- \Phi_i(\ue_n)\right] \partial_x \zeta^2 dxdt. \label{NRJ_3}
\eeqn
Admit for the moment Lemma~\ref{NRJ_lem}, stated and proven below.
We deduce from \refe{NRJ_1}, \refe{NRJ_2}, \refe{NRJ_3} and  Lemma \ref{NRJ_lem} that 
\beqn
&\ds \frac{\eps}{\| \l_i \|_\infty+1}\int_0^T\!\!\!\!\int_a^b \left(\partial_x\varphi_{i,n}(\ue_n)\right)^2 \zeta^2 dxdt &\nn\\
&\ds\le 
\frac{1}{2}\| \zeta \|^2_\infty  |b-a|   + \int_0^T\!\!\!\!\int_a^b \left|\ue_n\left( 
f_i(\ue_n)-\eps\partial_x\varphi_{i,n}(\ue_n) \right)
- \Phi_i(\ue_n)\right| |\partial_x\zeta^2 | dxdt . & \nn
\eeqn
Using now the fact that $\ue_n$ is a bounded flux solution, we deduce from \refe{max_flux} that 
$\big[\ue_n(f_i(\ue_n)$ $-\eps\partial_x\varphi_{i,n}(\ue_n)) - \Phi_i(\ue_n)\big]$ is uniformly bounded 
independently of $\eps$ and $n$, and so there exists $C$ only depending on 
$f_i$, $|b-a|$, $u_0$ and $\l_i$ such that:
\be\label{NRJ_43}\nn
\eps \int_0^T\!\!\!\!\int_a^b \left(\partial_x\varphi_{i,n}(\ue_n)\right)^2 \zeta^2 dxdt \le C \left(
\|\partial_x\zeta^2 \|_{L^1((0,T);\Mm(\R))} + \|\zeta \|^2_\infty\right).
\ee
This estimate still holds for $\zeta(x,t) = \chi_{(a,b)}(x)$, for all $(a,b)\in {\overline{\O}_i}^2$, so we obtain
\be\label{L2H1n}
\eps \int_0^T\!\!\!\!\int_a^b \left(\partial_x\varphi_{i,n}(\ue_n)\right)^2 dxdt\le C(2T+1).
\ee
Classical compactness arguments provide the convergence, up to a subsequence, of $\left(\ue_n\right)_n$
to a solution $\ue$ of \refe{Pe} in $L^p(\R\times(0,T))$, $1\le p<+\infty$. 
This ensures particularly that, up to a subsequence,
$$
\lim_{n\to +\infty} \ue_n = \ue \textrm{ a.e. in } \R\times (0,T).
$$
Taking the limit w.r.t. $n$ in \refe{L2H1n} yields
$$
\eps \int_0^T\!\!\!\!\int_a^b \left(\partial_x\varphi_{i}(\ue)\right)^2 dxdt\le C(2T+1).
$$
\end{proof}
\vskip 5pt
\begin{lemma}\label{NRJ_lem} 
Let $\ue_n$ be an approximate solution of \refe{Pe} with $\varphi_{i,n}$ instead 
of $\phii$, and let $\psi\in \Dd^+((a,b))$, then
$$
\Big<\partial_t\ue_n \ |\   \ue_n\zeta^2 \Big>
 \ge - \frac{1}{2}\| \zeta \|^2_\infty  |b-a|.
$$
\end{lemma}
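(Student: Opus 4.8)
The plan is to read $\Big<\partial_t\ue_n \ |\ \ue_n\zeta^2\Big>$ as a duality pairing over $(0,T)$ between $L^2((0,T);H^{-1}(a,b))$ and $L^2((0,T);H^1_0(a,b))$, and to collapse it, via the chain rule in time, into a difference of the ``energies'' $\frac{1}{2}\int_a^b \ue_n^2\,\zeta^2\,dx$ at $t=T$ and $t=0$. Once this is done, the lower bound is immediate from the pointwise bounds $0\le\ue_n\le 1$.

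First I would check that the pairing is well defined and that $\ue_n$ has enough time regularity for the endpoint values to make sense. Since $\partial_x\varphi_{i,n}(\ue_n)\in L^\infty(\OiT)$ by~\refe{unif_1} and $\varphi_{i,n}^{-1}$ is Lipschitz continuous, one gets $\partial_x\ue_n\in L^\infty(\OiT)$, hence $\ue_n\zeta^2\in L^2((0,T);H^1_0(a,b))$ because $\zeta\in\Dd^+((a,b))$ is smooth with compact support. From the equation in~\refe{Pe}, $\partial_t\ue_n=-\partial_x\big(f_i(\ue_n)-\eps\partial_x\varphi_{i,n}(\ue_n)\big)$ with the flux in $L^\infty(\OiT)$, so $\partial_t\ue_n\in L^2((0,T);H^{-1}(a,b))$; in particular $\ue_n\in\Cc([0,T];L^2(a,b))$ with $\ue_n(\cdot,0)=\ue_0$.

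Next, since $\zeta$ does not depend on $t$, I would use the standard chain-rule identity $\langle\partial_t v,\,v\zeta^2\rangle=\frac{1}{2}\frac{d}{dt}\int_a^b v^2\zeta^2\,dx$ (valid for $v\in L^2((0,T);H^1(a,b))$ with $\partial_t v\in L^2((0,T);H^{-1}(a,b))$, e.g.\ by applying it to $v\zeta\in L^2((0,T);H^1_0(a,b))$ and invoking Lions--Magenes), and integrate over $(0,T)$ to obtain
\[
\Big<\partial_t\ue_n \ |\ \ue_n\zeta^2\Big> = \frac{1}{2}\int_a^b \ue_n(x,T)^2\,\zeta(x)^2\,dx - \frac{1}{2}\int_a^b \ue_0(x)^2\,\zeta(x)^2\,dx .
\]
The terminal term is $\ge 0$ since $\ue_n\ge 0$, while $0\le\ue_0\le 1$ (Lemma~\ref{init_lem}) gives $\ue_0(x)^2\le 1$, so
\[
\Big<\partial_t\ue_n \ |\ \ue_n\zeta^2\Big> \ge -\frac{1}{2}\int_a^b \zeta(x)^2\,dx \ge -\frac{1}{2}\|\zeta\|_\infty^2\,|b-a|,
\]
which is the claim.

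The only genuine technical point is the chain-rule identity in the $L^2(H^{-1})$/$L^2(H^1_0)$ setting; everything else is bookkeeping. One may in fact bypass it entirely by using that the viscous approximation $\ue_n$ was introduced precisely to be regular enough that $\partial_t\ue_n$ is an honest $L^2$ function, in which case the bracket is literally $\int_0^T\!\!\int_a^b\partial_t\ue_n\,\ue_n\zeta^2\,dxdt$ and the displayed identity reduces to Fubini together with $\partial_t(\ue_n^2/2)=\ue_n\partial_t\ue_n$.
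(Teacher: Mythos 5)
Your proof is correct and follows essentially the same route as the paper: rewrite the bracket as $\langle\partial_t(\ue_n\zeta)\,|\,\ue_n\zeta\rangle$, invoke the Lions--Magenes chain rule to get the energy identity $\frac{1}{2}\int_a^b \ue_n(x,T)^2\zeta^2\,dx-\frac{1}{2}\int_a^b \ue_0(x)^2\zeta^2\,dx$, and bound below using $0\le\ue_n\le 1$. Your additional remarks on why the pairing is well defined and why $\ue_n\in\Cc([0,T];L^2(a,b))$ only make explicit what the paper leaves implicit.
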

\vskip 5pt
\begin{proof}
Since $\left(\ue_n\zeta\right)\in L^2((0,T);H^1_0(a,b))$, and 
$\partial_t \left(\ue_n\zeta\right)\in L^2((0,T);H^{-1}(a,b))$, and so, up to a negligible set,
$(\ue_n\zeta)\in C([0,T];L^2(a,b))$, and 
\beqn
\Big<\partial_t\ue_n \ |\   \ue_n\zeta^2 {\Big>} &=& \Big<\partial_t\ue_n\zeta \ |\   \ue_n\zeta {\Big>} \nn\\
&=&\frac{1}{2}\int_a^b \ue_n (x,T)^2  \zeta^2(x) dx  - \frac{1}{2}\int_a^b u_0(x)^2 \zeta^2(x)dx   \nn \\
& \ge& - \frac{1}{2}\| \zeta \|^2_\infty  |b-a|. \nn
\eeqn
\end{proof}
\subsection{the $BV$-estimates}
 In this section we suppose that $u_0\in BV(\R)$. In order to avoid heavy notations which would not lead to a good comprehension 
 of the problem, the following proof will be formal. To establish the following estimates in a rigorous frame, one can 
introduce of a thin layer $(-\eta,\eta)$ on which the pressure variates smoothly to replace the 
 interface, and add some additional viscosity to obtain smooth strong solutions to the problem.
 This regularization of the problem has been performed in \cite{BPvD03} and in \cite{CGP09}. 
 \vskip 5pt
For $a,b\in[0,1]$, we denote by 
$$F_i(a,b)=sign(a-b)(f_i(a)-f_i(b)).$$
 \begin{lemma}\label{BV_lem0}
 There exists $C$ depending only on $f_i$, $T$, $u_0$ such that
$$
\left| \partial_t F_i(\ue,\k)\right|_{\Mm_b(\OiT)} \le C.
$$
 \end{lemma}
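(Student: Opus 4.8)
The plan is to differentiate the parabolic equation in \refe{Pe} with respect to $t$ and test against $\sign(\partial_t \ue)$ (or rather against a smooth approximation of it), exploiting that $F_i(\ue,\k)$ has the same sign structure in $t$ as $\ue$ itself does away from the interface. First I would work on a fixed subdomain $\O_i\times(0,T)$, where the equation reads $\partial_t \ue = -\partial_x(f_i(\ue) - \eps\partial_x\phii(\ue))$. Setting $w = \partial_t \ue$, formal differentiation in time gives $\partial_t w + \partial_x(f_i'(\ue) w - \eps\partial_x(\phii'(\ue) w)) = 0$; multiplying by $\sign(w)$ and integrating over $\O_i$, the convective term $\partial_x(f_i'(\ue)w)$ contributes only boundary terms at $x=0$ after the Kato inequality $\sign(w)\partial_x(f_i'(\ue)w) \ge \partial_x(\sign(w) f_i'(\ue) w)$, and the diffusive term $-\eps\,\sign(w)\partial_x(\partial_x(\phii'(\ue)w))$ is $\le 0$ up to a boundary term, again by the standard sign-of-second-derivative argument. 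Hence $\frac{d}{dt}\int_{\O_i}|\partial_t\ue|\,dx$ is controlled by the flux traces of $\partial_t(f_i(\ue) - \eps\partial_x\phii(\ue))$ at $x=0$.

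Next I would handle the interface. By the flux transmission condition in \refe{Pe}, the total flux $f_i(\ue) - \eps\partial_x\phii(\ue)$ is continuous across $x=0$; differentiating this identity in $t$ shows that the boundary contributions from $\O_1$ and $\O_2$ cancel, so that $\sum_i \frac{d}{dt}\int_{\O_i}|\partial_t\ue|\,dx \le 0$, i.e.\ $t\mapsto \|\partial_t\ue(\cdot,t)\|_{L^1(\R)}$ is nonincreasing. The remaining point is the initialization: at $t=0$ one has, from the equation, $\partial_t\ue(\cdot,0) = -\partial_x(f_i(\ue_0) - \eps\partial_x\phii(\ue_0))$, whose $L^1$ norm is bounded by $TV(f_i(\ue_0)) + \eps\,TV(\partial_x\phii(\ue_0)) + (\text{interface jump of the flux})$. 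Using $u_0\in BV(\R)$ together with the bounds from Lemma~\ref{init_lem} ($\|\partial_x u_0^\eps\|_{L^1}\le TV(u_0)+4$, $\|\eps\partial_x u_0^\eps\|_\infty\le 1$) and \refe{max_flux}/\refe{unif_1}, this is bounded by a constant depending only on $f_i$, $\l_i$ and $u_0$. Therefore $\|\partial_t\ue\|_{L^\infty((0,T);L^1(\R))}\le C$, and integrating in $t$ gives $\|\partial_t\ue\|_{L^1(\OiT)}\le CT$.

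Finally I would convert this into the claimed bound on $\partial_t F_i(\ue,\k)$. Since $F_i(\ue,\k) = \sign(\ue-\k)(f_i(\ue)-f_i(\k))$ and $f_i$ is Lipschitz, $F_i(\cdot,\k)$ is Lipschitz in its first argument uniformly in $\k$, with constant $Lip(f_i)$; hence $|\partial_t F_i(\ue,\k)| \le Lip(f_i)\,|\partial_t\ue|$ as measures on $\OiT$, which together with the previous step yields $|\partial_t F_i(\ue,\k)|_{\Mm_b(\OiT)} \le Lip(f_i)\cdot CT =: C$ independent of $\eps$ and $\k$. The main obstacle is making the time-differentiation and the Kato-type sign argument rigorous, since $\phii^{-1}$ is not Lipschitz and $\ue$ is only a bounded-flux (not strong) solution; as the authors indicate, this is why the argument is presented formally, with the rigorous version relying on the thin-layer plus added-viscosity regularization of \cite{BPvD03,CGP09}, on which all the above manipulations are licit and the bounds pass to the limit.
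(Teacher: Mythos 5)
Your proposal reaches the correct bound and shares the paper's overall strategy---control $\|\partial_t\ue(\cdot,t)\|_{L^1(\R)}$ by its value at $t=0$, bound that initial value through the equation and Lemma~\ref{init_lem}, then transfer the estimate to $F_i(\ue,\k)$ by the Lipschitz continuity of $f_i$ uniformly in $\k$---but it takes a genuinely different technical route for the central monotonicity step. You differentiate the equation in time and run a Kato-type argument on $w=\partial_t\ue$; the paper instead applies the already-established comparison principle \refe{comp2} to the two bounded flux solutions $\ue(\cdot,\cdot+h)$ and $\ue$, getting $\int_\R|\ue(x,t+h)-\ue(x,t)|\,dx\le\int_\R|\ue(x,h)-\ue_0(x)|\,dx$, and then divides by $h$ and lets $h\to0$. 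The paper's route is shorter and, more importantly, it absorbs the only genuinely delicate point---the interface at $x=0$---into Proposition~\ref{bounded_comp}, which is already proved. In your version that point resurfaces: the boundary terms produced by the Kato argument are of the form $\sign(w(0^\mp,t))\,\partial_t\bigl(f_i(\ue)-\eps\partial_x\phii(\ue)\bigr)(0^\mp,t)$, and continuity of the flux across the interface does not by itself make them cancel, because $\sign(w(0^-,t))$ and $\sign(w(0^+,t))$ need not agree; one must additionally invoke the graphical transmission condition (here, $u_1=1$ or $u_2=0$) to get an inequality of the right sign, exactly as in the proof of the comparison principle. Since both arguments are avowedly formal and rely on the same thin-layer/added-viscosity regularization, this is not a fatal flaw, but you should either supply that sign argument at the interface or simply quote \refe{comp2} applied to time-translates, which is what the paper does and which costs nothing since that proposition is needed elsewhere anyway. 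Your initialization step and the final passage from $\partial_t\ue$ to $\partial_t F_i(\ue,\k)$ coincide with the paper's.
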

 \begin{proof}
 Suppose in the sequel that $\ue$ is a strong solution, i.e.  
 \be\label{strong}\nn
 \partial_t \ue + \partial_x\left[  f_i(\ue)-\eps \partial_x \phii(\ue) \right] = 0
 \ee
 holds point-wise in $\OiT$. Let $h>0$, and let $t\in (0,T-h)$. Comparing $\ue(\cdot, \cdot+h)$ and $\ue$ 
 with \refe{comp2} yieds
 $$
\int_\R |\ue(x,t+h)-\ue(x,t)|dx \le  \int_\R |\ue(x,h)-\ue_0(x)|dx.
 $$
Dividing by $h$ and letting $h$ tend to $0$, one can claim using the fact that $\ue$ is supposed to be a strong solution
\beqn
\int_\R |\partial_t \ue(x,t) | dx &=& \sum_{i=1,2}\int_{\O_i} |  
\partial_x\left[ f_i(\ue)(x,t)-\eps \partial_x \phii(\ue)(x,t) \right]  | dx\nn \\
& \le & \int_\R |\partial_t \ue_0(x) | dx \nn \\
& \le & \sum_{i=1,2}\int_{\O_i} |  \partial_x\left[ f_i(\ue_0)(x)-\eps \partial_x \phii(\ue_0)(x)  \right]  | dx\nn
\eeqn
Lemma \ref{init_lem} then ensures that there exists $C$ not depending on $\eps$ such that
\be\label{BV1}
\int_0^T\int_\R |\partial_t \ue(x,t) | dxdt = \int_0^T\sum_{i=1,2}\int_{\O_i} |  
\partial_x\left[ f_i(\ue)(x,t)-\eps \partial_x \phii(\ue)(x,t) \right] | dxdt \le C.
\ee
Thanks to the regularity of $f_i$, this particularly ensures that, if we denote 
by $\Mm_b(\OiT)$ the set of the bounded Radon measure on $\OiT$, 
i.e.  the dual space of $\Cc_c(\overline\O_i\times[0,T),\R)$ with the uniform norm, 
we obtain: $\forall \k\in [0,1]$,
$$
\left| \partial_t F_i(\ue,\k)\right|_{\Mm_b(\OiT)} \le C\| f_i' \|_{\infty}.
$$
\end{proof}
\begin{lemma}\label{BV_lem}
There exists $C$ depending only on $u_0$, $f_i$ and $T$ such that 
$$
\Big| \partial_x \Big( F_i(\ue, \k) - \eps \partial_x | \phii(\ue)-\phii(\k) | \Big) \Big|_{\Mm_b(\OiT)} \le C.
$$
\end{lemma}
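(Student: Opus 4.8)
The plan is to derive the Kruzhkov-type entropy \emph{identity} associated with the interior equation in each $\OiT$ and to read off from it the $x$-derivative of the claimed flux. As announced at the head of the subsection, I would argue formally on a smooth strong solution; rigorously this means regularising the interface by a thin layer and adding a viscosity $1/n$ as in \cite{BPvD03,CGP09}, then passing to the limit, the bound surviving by lower semicontinuity of the total variation. Write $q^\eps := f_i(\ue) - \eps\partial_x\phii(\ue)$ for the total flux, so that the interior equation reads $\partial_t\ue + \partial_x q^\eps = 0$. By \refe{unif_1}, $\eps\partial_x\phii(\ue)$, and hence $q^\eps$, is bounded in $L^\infty$ uniformly in $\eps$ by a constant depending only on the $f_i$ and $\l_i$; moreover $\partial_x q^\eps = -\partial_t\ue$ and $q^\eps$ is continuous across $\{x=0\}$ by the flux transmission condition, so $q^\eps(\cdot,t)\in BV(\R)$ for a.e.\ $t$ and admits bounded traces at $0^\pm$ and bounded limits as $|x|\to\infty$. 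The other ingredient already in hand is \refe{BV1} from the proof of Lemma~\ref{BV_lem0}, namely $\|\partial_t\ue\|_{L^1(\R\times(0,T))}\le C$ with $C$ depending only on $u_0$, $f_i$ and $T$.

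Next I would note that, $\phii$ being increasing, $\partial_x|\phii(\ue)-\phii(\k)| = \sign(\ue-\k)\,\partial_x\phii(\ue)$, so the flux in the statement is exactly $G_i^\eps := F_i(\ue,\k) - \eps\partial_x|\phii(\ue)-\phii(\k)| = \sign(\ue-\k)\,(q^\eps - f_i(\k))$, whence $|G_i^\eps|\le \|q^\eps\|_\infty + \max_i\|f_i\|_{L^\infty(0,1)}$. Multiplying the interior equation by $\sign(\ue-\k)$ (the usual vanishing-viscosity entropy computation, using $\sign(\phii(\ue)-\phii(\k))=\sign(\ue-\k)$) produces, in each $\OiT$, the identity $\partial_t|\ue-\k| + \partial_x G_i^\eps = -m_\k^\eps$, where $m_\k^\eps\ge 0$ is the entropy-dissipation measure concentrated on $\{\ue=\k\}$; equivalently $-m_\k^\eps = (q^\eps-f_i(\k))\,\partial_x\sign(\ue-\k)$, which is $\le 0$ because at a crossing of the level $\k$ one has $q^\eps-f_i(\k) = -\eps\l_i(\k)\partial_x\ue$. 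Since $\partial_x G_i^\eps = -\partial_t|\ue-\k| - m_\k^\eps$ is thus a difference of two bounded measures, it suffices to bound their masses, uniformly in $\eps$ and $\k$.

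The bound on $\partial_t|\ue-\k|$ is free: $|\partial_t|\ue-\k||\le|\partial_t\ue|$, so its mass is $\le\|\partial_t\ue\|_{L^1}\le C$. For $m_\k^\eps$ I would integrate the entropy identity over $\OiT$ against a cut-off in $x$ and $t$ increasing to $1$, obtaining
$$
\int_{\OiT} m_\k^\eps \;=\; \int_{\O_i}\big(|\ue_0-\k|-|\ue(\cdot,T)-\k|\big)\,dx \;-\; \int_0^T\big[G_i^\eps\big]_{\partial\O_i}\,dt .
$$
The boundary term is controlled by $2T\big(\|q^\eps\|_\infty+\max_i\|f_i\|_{L^\infty(0,1)}\big)$ thanks to the traces and limits of $q^\eps$ recalled above. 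I expect the main obstacle to be the first term: since $\O_i$ has infinite measure, for $\k\neq 0$ each of $\int_{\O_i}|\ue_0-\k|$ and $\int_{\O_i}|\ue(\cdot,T)-\k|$ is in general infinite, so one must exploit the cancellation — the integrand is dominated by $|\ue_0-\ue(\cdot,T)|$ and $\int_{\O_i}|\ue_0-\ue(\cdot,T)|\,dx\le\int_0^T\!\int_{\O_i}|\partial_t\ue|\,dxdt\le C$ by \refe{BV1} — which is precisely why the cut-off-then-pass-to-the-limit must be done carefully. This gives $\int_{\OiT}m_\k^\eps\le C$, hence $\big|\partial_x G_i^\eps\big|_{\Mm_b(\OiT)}\le 2C$, with $C$ depending only on $u_0$ (through $TV(u_0)$), $f_i$ and $T$; passing from the regularised solutions back to $\ue$ then completes the argument.
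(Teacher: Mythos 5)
Your argument is correct and is essentially the paper's proof in different clothing: the decomposition $\partial_x\big(F_i(\ue,\k)-\eps\partial_x|\phii(\ue)-\phii(\k)|\big)=-\partial_t|\ue-\k|-m_\k^\eps$ with the mass of the dissipation measure $m_\k^\eps$ bounded by integrating the entropy identity against cut-offs is exactly what the paper achieves by testing the Carrillo inequality with $\psi(1-\omega_\eta)$ (to reach the interface, paying the price controlled by the uniform flux bound \refe{unif_car}) and then with $\|\psi\|_\infty\chi_\psi\pm\psi$, the $\partial_t|\ue-\k|$ term being controlled in both cases by the $L^1$ bound \refe{BV1} on $\partial_t\ue$. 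The only point where you had to work slightly harder --- the non-integrability of $|\ue_0-\k|$ at infinity, resolved by the cancellation $\big||\ue_0-\k|-|\ue(\cdot,T)-\k|\big|\le|\ue_0-\ue(\cdot,T)|$ --- is sidestepped in the paper because its cut-offs $\chi_\psi$ are compactly supported and normalized by $\|\partial_x\chi_\psi\|_{L^1}=1$, but this is a presentational difference, not a different method.
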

\vskip 5pt
\begin{proof}
It follows from the work of Carillo (see e.g. \cite{Car99}) that for all $\k\in [0,1]$,  for all $\psi\in \Dd^+(\O_i \times [0,T))$,
\beqn
&\ds \int_0^T\int_{\O_i} | \ue -\k | \partial_t \psi dxdt + \int_{\O_i} | \ue -\k | \psi(0) dx  &\nn\\
&\ds  \int_0^T\int_{\O_i} \Big( F_i(\ue, \k) - \eps \partial_x | \phii(\ue) - \phii(\k) |  \Big) \partial_x \psi dxdt \ge 0.& \label{car1}
\eeqn
Let $\eta>0$, we denote by $\omega_\eta(x)= \left( 1-|x|/\eta \right)^+$, and suppose now that $\psi$ belongs to 
$ \Dd^+(\overline\O_i \times [0,T))$, i.e.  $\psi$ does not vanish on the interface $\{x=0\}$. 
Estimate \refe{car1} still holds when we consider $\psi_\eta=\psi (1-\omega_\eta)$ as test function.
\beqn
&\ds \int_0^T\int_{\O_i} | \ue -\k | (1-\omega_\eta) \partial_t \psi dxdt 
+ \int_{\O_i} | \ue -\k | \psi(0) (1-\omega_\eta) dx  &\nn\\
&\ds +  \int_0^T\int_{\O_i} \Big( F_i(\ue, \k) - \eps \partial_x | \phii(\ue) - \phii(\k) |  \Big)(1-\omega_\eta) \partial_x \psi dxdt & \nn \\
\ge &\ds  \int_0^T\int_{\O_i} \Big( F_i(\ue, \k) - \eps \partial_x | \phii(\ue) - \phii(\k) |  \Big)  \psi \partial_x\omega_\eta dxdt .& \label{car2}
\eeqn
The fact that the flux induced by $\ue$ is uniformly bounded w.r.t. $\eps$ thanks to \refe{unif_1} implies 
that there exists $C$ depending only on $u_0$, $f_i$ such that 
\be\label{unif_car}
\left\| F_i(\ue, \k) - \eps \partial_x | \phii(\ue) - \phii(\k) | \right\|_{L^\infty(\OiT)} \le C.
\ee
Then we obtain the following estimate on the right-hand-side in \refe{car2}:
$$
\left| \int_0^T\int_{\O_i} \Big( F_i(\ue, \k) - \eps \partial_x | \phii(\ue) - \phii(\k) |  \Big)  \psi \partial_x\omega_\eta dxdt \right|\le C T \| \psi \|_\infty.
$$
Letting $\eta$ tend to $0$ in inequality \refe{car2} gives: $\forall \psi\in \Dd^+(\overline\O_i\times [0,T))$, $\forall \k \in [0,1]$,  
\beqn
&\ds \int_0^T\int_{\O_i} | \ue -\k |  \partial_t \psi dxdt + \int_{\O_i} | \ue -\k | \psi(0) dx  &\nn\\
&\ds + \int_0^T\int_{\O_i} \Big( F_i(\ue, \k) - \eps \partial_x | \phii(\ue) - \phii(\k) |  \Big) \partial_x \psi dxdt \ge - CT \| \psi \|_\infty&  \label{car3}
\eeqn
We introduce now a monotonous function $\chi_\psi \in \Dd^+(\overline\O_i)$ equal to $1$ on the support of $\psi(\cdot,t)$ for all $t\in [0,T]$
(so that $\|\partial_x \chi_\psi \|_{L^1(\O_i)}=1$), then 
$\| \psi \|_\infty \chi_\psi \ge \psi $. Choosing $\| \psi \|_\infty \chi_\psi - \psi $ and  $\| \psi \|_\infty \chi_\psi + \psi $
as test function in \refe{car3} yields, using \refe{unif_car} once again
\be\label{BV_car1}
 \left| \partial_t |\ue-\k | - \partial_x \Big( F_i(\ue, \k) - \eps \partial_x | \phii(\ue) - \phii(\k) | \Big) \right|_{\Mm_b(\OiT)}\le 2CT. 
 \ee
Thanks to \refe{BV1}, there exists $C'$ depending only on $u_0$, $f_i$ and $T$ such that
\be\label{BV_car2}
\left| \partial_t |\ue-\k | \right|_{\Mm_b(\OiT)}\le C'.
\ee
Lemma \ref{BV_lem} is so a consequence of \refe{BV_car1} and \refe{BV_car2}.
\end{proof}
\vskip 5pt
\begin{proposition}\label{BV_prop}
Let $u_0\in BV(\R)$ and let $K=[a,b] \subset \overline\O_i$.
We introduce $z_{K}^\eps(x,t)$ defined on the whole space $\R^2$ given by:
$$
z_{K,\k}^\eps(x,t)=\left\{\begin{array}{ll}
F_i(\ue, \k)(x,t) &\textrm{if } (x,t)\in K\times(0,T),\\
0&\textrm{otherwise.}
\end{array}\right.
$$
There exists $C$ depending only on $u_0, f_i, T, K$ 
and a uniformly bounded function $r_{K,\k}$, with $r_{K,\k}(\eps)$ tends uniformly to $0$ with respect to $\k$ as $\eps\to 0$,  such that,
for all $(\xi,h)\in\R^2$, 
$$
\iint_{\R^2} \left| z_{K,\k}^\eps(x+\xi,t+h) - z_{K,\k}^\eps(x,t) \right|dxdt\le C(|\xi|+|h|)+r_{K,\k}(\eps).
$$
\end{proposition}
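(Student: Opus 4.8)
The plan is to prove the estimate by a Kolmogorov--Riesz translation argument, handling the time and space directions separately and combining them through the triangle inequality. Set $M_i:=2\|f_i\|_{L^\infty(0,1)}$, so that $|F_i(a,b)|=|f_i(a)-f_i(b)|\le M_i$ for all $a,b\in[0,1]$; hence $z_{K,\k}^\eps$ is bounded by $M_i$ and supported in the fixed rectangle $K\times[0,T]$, for every $\eps$ and $\k$. The claimed inequality is then trivial (for $C$ large) whenever $|\xi|+|h|\ge1$, since its left-hand side is at most $2M_i|K\times(0,T)|$, so we may assume $|\xi|+|h|\le1$. For the time translations I would use Lemma~\ref{BV_lem0}, which gives $|\partial_t F_i(\ue,\k)|_{\Mm_b(\OiT)}\le C$ uniformly in $\eps$ and $\k$; the elementary $L^1$-translation estimate for a bounded, boundedly supported function whose distributional time derivative is a bounded measure in the interior of its support yields
$$\iint_{\R^2}\left| z_{K,\k}^\eps(x,t+h) - z_{K,\k}^\eps(x,t)\right|dxdt \le |h|\,\big|\partial_t F_i(\ue,\k)\big|_{\Mm_b(\OiT)} + 2M_i\,|K|\,|h| \le C|h|,$$
the last term accounting for the jumps of $z_{K,\k}^\eps$ across $\{t=0\}$ and $\{t=T\}$ created by the extension by zero.

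The spatial estimate is the crux, because $F_i(\ue,\k)$ alone has no $\eps$-uniform variation bound: only the total flux does. On $K\times(0,T)$ I would split $F_i(\ue,\k)=G_{K,\k}^\eps+w_{K,\k}^\eps$ with $G_{K,\k}^\eps:=F_i(\ue,\k)-\eps\partial_x|\phii(\ue)-\phii(\k)|$ and $w_{K,\k}^\eps:=\eps\partial_x|\phii(\ue)-\phii(\k)|$. By~\refe{unif_car}, $\|G_{K,\k}^\eps\|_{L^\infty(\OiT)}\le C$, and by Lemma~\ref{BV_lem}, $|\partial_x G_{K,\k}^\eps|_{\Mm_b(\OiT)}\le C$, both uniformly in $\eps$ and $\k$; the same translation estimate as above, now applied in the $x$ variable, gives
$$\iint_{\R^2}\left|\chi_{K\times(0,T)}G_{K,\k}^\eps(x+\xi,t)-\chi_{K\times(0,T)}G_{K,\k}^\eps(x,t)\right|dxdt\le C|\xi|.$$

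For the remainder $w_{K,\k}^\eps$, the function $\phii$ being increasing and $\phii(\ue)$ absolutely continuous in $x$ (its spatial derivative lies in $L^\infty(\OiT)$ for fixed $\eps$ by Definition~\ref{sol_Pe_def}), one has $\bigl|\partial_x|\phii(\ue)-\phii(\k)|\bigr|\le|\partial_x\phii(\ue)|$ a.e. Hence, using the crude bound $\iint_{\R^2}|\chi_{K\times(0,T)}w_{K,\k}^\eps(x+\xi,t)-\chi_{K\times(0,T)}w_{K,\k}^\eps(x,t)|\,dxdt\le2\iint_{K\times(0,T)}|w_{K,\k}^\eps|$, the Cauchy--Schwarz inequality, and Proposition~\ref{L2H1},
$$\iint_{\R^2}\left|\chi_{K\times(0,T)}w_{K,\k}^\eps(x+\xi,t)-\chi_{K\times(0,T)}w_{K,\k}^\eps(x,t)\right|dxdt\le2\eps\iint_{K\times(0,T)}|\partial_x\phii(\ue)|\le2\sqrt\eps\Big(\eps\iint_{K\times(0,T)}(\partial_x\phii(\ue))^2\Big)^{1/2}|K\times(0,T)|^{1/2},$$
and the right-hand side is bounded by $r_{K,\k}(\eps):=2C\sqrt\eps\,|K\times(0,T)|^{1/2}$. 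Crucially the constant in Proposition~\ref{L2H1} depends only on $f_i$, $K$ and $T$, so $r_{K,\k}(\eps)$ does not depend on $\k$ and tends to $0$ as $\eps\to0$. Adding the $G$ and $w$ contributions gives $\iint_{\R^2}|z_{K,\k}^\eps(x+\xi,t)-z_{K,\k}^\eps(x,t)|dxdt\le C|\xi|+r_{K,\k}(\eps)$.

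Combining the space and time estimates via
$$\iint_{\R^2}\left| z_{K,\k}^\eps(x+\xi,t+h)-z_{K,\k}^\eps(x,t)\right|dxdt\le\iint_{\R^2}\left| z_{K,\k}^\eps(x+\xi,t+h)-z_{K,\k}^\eps(x,t+h)\right|dxdt+\iint_{\R^2}\left| z_{K,\k}^\eps(x,t+h)-z_{K,\k}^\eps(x,t)\right|dxdt$$
then yields the bound $C(|\xi|+|h|)+r_{K,\k}(\eps)$, which is the claim. The one genuinely delicate point is the spatial estimate: the lack of a uniform $BV$ bound on $F_i(\ue,\k)$ itself forces the decomposition through the total flux $G_{K,\k}^\eps$, and the $\k$-uniform smallness of the diffusive remainder $w_{K,\k}^\eps$ is obtained precisely by pairing the $\eps$-weighted $L^2H^1$ bound of Proposition~\ref{L2H1} with Cauchy--Schwarz; the jump terms generated by the zero-extension are routine, and all the constants are manifestly independent of $\k$.
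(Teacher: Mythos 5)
Your proof is correct and follows essentially the same route as the paper: time translates are controlled via Lemma~\ref{BV_lem0} (plus the jump terms created by the zero extension), and space translates via the decomposition of $F_i(\ue,\k)$ into the total entropy flux $F_i(\ue,\k)-\eps\partial_x|\phii(\ue)-\phii(\k)|$, bounded in $BV$ by Lemma~\ref{BV_lem} and in $L^\infty$ by \refe{unif_car}, plus the diffusive remainder whose $L^1$ norm is made small uniformly in $\k$. Your explicit Cauchy--Schwarz estimate $r_{K,\k}(\eps)=O(\sqrt{\eps})$ via Proposition~\ref{L2H1} just makes precise the final step that the paper leaves as a remark.
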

\vskip 5pt
\begin{proof}
Let $h\in\R$, one has
\beqn
&\ds \iint_{\R^2} \left|z_{K,\k}^\eps(x,t+h) - z_{K,\k}^\eps(x,t) \right| dxdt   \le  \left| \partial_t   z_{K,\k}^\eps \right|_{\Mm_b(\R^2)}|h| &\nn \\
\le  &\ds \left( \left|\partial_t F_i(\ue, \k)\right|_{\Mm_b(\OiT)} +2 \|F_i(\ue, \k)\|_{L^\infty}(T+b-a)\right)|h| &\nn
\eeqn
It follows from Lemma~\ref{BV_lem0} that there exists $C_1$ depending only on $u_0, f_i, K, T, \phii$ such that
\be\label{BV_prop_1}
\iint_{\R^2} \left|z_{K,\k}^\eps(x,t+h) - z_{K,\k}^\eps(x,t) \right| dxdt   \le C_1|h|.
\ee
We also define
$$
q^\eps_{K,\k}(x,t)=\left\{\begin{array}{ll}
\eps\partial_x|\phii(\ue)(x,t)-\phii(\k)| &\textrm{if }(x,t)\in K\times(0,T),\\
0&\textrm{otherwise.}
\end{array}\right.
$$
Proposition \ref{L2H1} ensures that $q^\eps_{K,\k}(x,t)$ converges to $0$ almost everywhere in $\R^2$, and the estimate \refe{unif_car} 
ensures us that $q^\eps_{K,\k}(x,t)$ stays uniformly bounded in $L^\infty(\R^2)$ with respect to $\eps$. \\
Let $\xi\in\R$, then we have:
\beqn
&\ds \ds \iint_{\R^2} \left|z_{K,\k}^\eps(x+\xi,t) - q_{K,\k}^\eps(x+\xi,t) -\left(  z_{K,\k}^\eps(x,t) - q_{K,\k}^\eps(x,t)   \right) \right| dxdt     &\nn\\
\le &\ds    \left( \begin{array}{c}
\ds\left|\partial_x \left(F_i(\ue, \k)-\eps\partial_x|\phii(\ue)(x,t)-\phii(\k)| \right)\right|_{\Mm_b(\OiT)}\\
\ds+ 2 (T+b-a)\left(\left\|F_i(\ue, \k) - \eps\partial_x|\phii(\ue)(x,t)-\phii(\k)| \right\|_{L^\infty} \right)
\end{array}\right)  |\xi |. & \label{BV_prop_2}
\eeqn
Using \refe{unif_car} and Lemma~\ref{BV_lem} in \refe{BV_prop_2} yields that there exists $C_2$ depending only on $u_0, f_i, K, T, \phii$ such that
$$
\iint_{\R^2} \left|z_{K,\k}^\eps(x+\xi,t) - q_{K,\k}^\eps(x+\xi,t) -\left(  z_{K,\k}^\eps(x,t) - q_{K,\k}^\eps(x,t)   \right) \right| dxdt  \le C_2 |\xi|.
$$
This particularly ensures that:
\be\label{BV_prop_3}
\iint_{\R^2} \left|z_{K,\k}^\eps(x+\xi,t) - z_{K,\k}^\eps(x,t) \right| dxdt \le C_2|\xi|+ 2 \|q_{K,\k}^\eps(x,t)\|_{L^1(\R^2)}.
\ee
By choosing $C=\max(C_1,C_2)$, one we deduce from \refe{BV_prop_1} and \refe{BV_prop_3} that
$$
\iint_{\R^2} \left|z_{K,\k}^\eps(x+\xi,t+h) - z_{K,\k}^\eps(x,t) \right| dxdt \le C(|h|+|\xi|)+ 2 \|q_{K,\k}^\eps(x,t)\|_{L^1(\R^2)}.
$$
We conclude the proof of Proposition~\ref{BV_prop} by checking that $ \|q_{K,\k}^\eps(x,t)\|_{L^1(\R^2)}$ 
converges uniformly to $0$ with respect to $\k$ as $\eps$ tends to $0$.
\end{proof}\\
\subsection{some steady solutions}\label{steady_sol}
The spatial discontinuities of the saturation and capillary pressure allow us to consider 
Kru\v{z}kov entropies $|u-\k|$ only for non-negative test functions 
\emph{vanishing on the interface}, i.e.  in $\Dd^+(\R^\star\times[0,T))$. 
This is not enough to obtain the convergence of $\ue$ toward an entropy solution $u$ in the 
sense of Definition~\ref{entro_Pih_def}, and a relation has also to be derived 
at of the interface. 
\vskip 5pt
In order to deal with 
general test functions belonging to $\Dd^+(\R\times[0,T))$, 
we will so have to introduce some approximate Kru\v{z}kov entropies 
$| \cdot- \k^\eps(x)|$, where $\k^\eps$ are steady solutions of \refe{Pe}.
Letting $\eps\to0$, those steady solutions converge to piecewise constant 
function $\t\k^j$ defined below. The functions $| \cdot - \t\k^j(x)|$ correspond to the so called \emph{partially adapted entropies} introduced by Audusse and Perthame \cite{AP05}. We will then be able to compare the limit $u$ of approximate solutions $\ue$ to this limit $\t\k^j$, and then to prove that $u$ is the unique entropy solution.
\vskip 5pt

The building of convenient 
$\k^\eps$ strongly uses Assumption~\refe{croissance_hyp2}. It will be shown in \cite{NC_choc} that if \refe{croissance_hyp2} fails, non classical shocks can occur 
at the interface, and the limit $\t u$ of the approximate solutions $\ue$ is thus not an entropy 
solution.
\vskip 5pt

Recall that $q\ge 0$, $P_1<P_2$, and suppose that $0<\eps < P_2-P_1$. Some 
simple adaptations can be done to cover the case $q < 0$.
The transmission condition 
\refe{trans_p} can be summarized as follow: either $u_1=1$, or $u_2=0$.
\vskip 5pt
We have to introduce the following sets:
$$
\Ee_1=\{ \k_1\ / \exists \k_2 \textrm{ with } f_1(\k_1)=f_2(\k_2)\},
$$
$$
\Ee_2=\{ \k_2\ / \exists \k_1 \textrm{ with } f_1(\k_1)=f_2(\k_2)\}.
$$
It follows from Assumption~\refe{croissance_hyp2} that either $\Ee_1=[0,1]$, or $\Ee_2=[0,1]$, and so 
we are ensured that $\k\in[0,1]$ belongs either to $\Ee_1$, or to $\Ee_2$ 
(or of course to both).
Check also that for all $z\in (0,q]$, it follows from \refe{croissance_hyp2} that there exists a unique 
$\k_{i}(z)$ such that $f_i(\k_i(z))=z$. On the contrary, if $z\in \Ee_i$ with $z\le 0$, $z$ has two antecedents 
through $f_i$. Let $\k$ belong to $\Ee_j$, one denotes 
\be\label{okj}
\overline\k_i^j=\max\{\nu \ | \ f_i(\nu)=f_j(\k)\}
\ee
and 
\be\label{ukj}
\underline\k_i^j=\min\{\nu \ | \ f_i(\nu)=f_j(\k)\}.
\ee

\begin{definition}[reachable steady state]\label{Def:reachable}
A function $\k(x)$ is said to be a \emph{reachable steady state} if there exists a steady entropy solution $\k^\eps(x)$ to the problem~\eqref{Pe} in the sense of Definition~\ref{sol_Pe_def} converging to $\k(x)$ in $L^1(\R)$ as $\eps$ tends to $0$.
\end{definition}
\vskip 10pt
This section is devoted to establish the following proposition, that exhibits all the reachable steady states. In particular, all the steady states that are 
not undercompressible are reachable.
\vskip 5pt
\begin{proposition}\label{steady_prop}
For all $\k \in \Ee_j$, there exists a family of steady solutions ${(\k^\eps)}_\eps$ to the problem 
\refe{Pe} such that 
\be
\k^\eps \to \t\k^j \textrm{ a.e. in } \O_i,
\ee
where $ \t\k^j(x) $ can be chosen between:
\begin{itemize}
\item[i)]
$\t\k^j(x) = \overline\k_1^j$ if $x<0$ and $\underline\k_2^j$  if  $x>0$;
\item[ii)] 
$\t\k^j(x) = \overline\k_1^j$ if $x<0$ and $\overline\k_2^j$  if  $x>0$;
\item[iii)]
$\t\k^j(x) = \underline\k_1^j$ if $x<0$ and $\underline\k_2^j$  if  $x>0$.
\end{itemize}
In particular, $\k_{opt}$ defined in \eqref{eq:k_opt_1}-\eqref{eq:k_opt_2} is a reachable steady state.
\end{proposition}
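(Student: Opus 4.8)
The plan is to construct the steady solutions $\k^\eps$ explicitly by solving, in each subdomain $\O_i$, the stationary ODE
$$
f_i(\k^\eps) - \eps \partial_x \phii(\k^\eps) = Q^\eps
$$
for a constant flux $Q^\eps$ that will be chosen so that the transmission conditions in \refe{Pe} are satisfied, and then passing to the limit $\eps \to 0$. Fix $\k \in \Ee_j$; by symmetry assume $j=2$ (the other case is analogous after exchanging the roles of the subdomains), so there is $\k_1$ with $f_1(\k_1) = f_2(\k)=: Q$. First I would treat the generic case $Q \in (0,q)$: by Assumption~\refe{croissance_hyp2}, in each $\O_i$ the equation $f_i(s) = Q$ has exactly two roots $\underline\k_i^2 < \overline\k_i^2$, and the steady ODE with flux $Q^\eps$ close to $Q$ is a scalar autonomous ODE whose equilibria are the roots of $f_i(s) = Q^\eps$. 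Using monotonicity of $\phii$ (so $\partial_x \phii(\k^\eps)$ has the sign of $\partial_x \k^\eps$) together with the shape of $f_i$ from \refe{croissance_hyp2}, one checks that connecting orbits exist from any chosen root on the left end of $\O_1$ toward one of the roots as $x \to 0^-$, and symmetrically on $\O_2$; letting $\eps \to 0$ the solution converges a.e. to the piecewise constant function taking the chosen root values, which are among $\underline\k_i^2, \overline\k_i^2$. This produces the three listed profiles i), ii), iii), provided the transmission conditions can be met at $x=0$.

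The key technical point is the matching at the interface. The flux continuity forces the limiting fluxes from both sides to agree, which is automatic since both equal $Q$; at the $\eps$-level one must pick $Q^\eps \to Q$ so that $f_1(\k^\eps)(0^-) - \eps\partial_x\varphi_1(\k^\eps)(0^-) = f_2(\k^\eps)(0^+) - \eps\partial_x\varphi_2(\k^\eps)(0^+)$, which can be arranged by a continuity/shooting argument in $Q^\eps$. The capillary transmission condition, which for $0<\eps < P_2-P_1$ reduces (as noted in the text) to ``either $u_1=1$ or $u_2=0$'', must then be checked: this is where Assumption~\refe{croissance_hyp2} and the orientation hypothesis \refe{eq:orientation_interface} are essential. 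Concretely, one shows that for the admissible choices of roots the steady profile can be taken so that either $\k^\eps(0^-)=1$ or $\k^\eps(0^+)=0$ — intuitively, the capillary barrier at a positively-oriented-vs-gravity interface only allows the fluid to cross in one configuration, which is exactly the set of connections that are not undercompressive. The case $Q \le 0$ (possible only in the subdomain where $f_i$ actually takes negative values, i.e. when $b_i>0$) is handled the same way, noting that then $\k\in\Ee_j$ may fail for some $\k$ and only those $\k$ with a common flux value are treated; and the degenerate cases $Q=0$ or $Q=q$ (where a root of $f_i(s)=Q$ is $s=0$, $s=1$, or $s=b_i$) are limiting cases of the generic construction, using that constant functions equal to $0$ or $1$ are trivially steady solutions and that the graph $\t\pi_i^\eps$ at $u=0,1$ is a half-line, giving extra freedom at the interface.

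Finally, to identify $\k_{\rm opt}$ from \refe{eq:k_opt_1}--\refe{eq:k_opt_2} as a reachable steady state, I would simply observe that in the case $f_1(b_1) \le f_2(b_2)$ the optimal connection uses $\k=b_2\in\Ee_2$ with $Q=f_2(b_2)$, and $\k_{\rm opt}$ equals $\underline b_2=\underline\k_1^2$ on $x<0$ and $b_2 = \underline\k_2^2 = \overline\k_2^2$ on $x>0$, which is exactly profile i) (degenerate since $b_2$ is the double root); symmetrically, the case $f_1(b_1)\ge f_2(b_2)$ gives profile i) with the roles of $\O_1,\O_2$ exchanged, i.e. $\k=b_1$, $Q=f_1(b_1)$, and $\k_{\rm opt}$ equal to $b_1$ on $x<0$ and $\overline b_1 = \overline\k_2^1$ on $x>0$. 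Since the text has already verified that $(x,t)\mapsto\k_{\rm opt}(x)$ is a steady entropy solution in the sense of Definition~\ref{entro_Pih_def}, consistency of the limits follows. The main obstacle I anticipate is the rigorous verification of the interface conditions at the $\eps$-level — in particular showing that the capillary graph condition $(1-u_1)u_2=0$ is compatible with the chosen connecting orbits and selects precisely the non-undercompressive connections — since this requires a careful case analysis of the phase portrait of the steady ODE near $x=0$ depending on the sign of $Q^\eps$ and the relative positions of $b_1,b_2$; the passage to the limit itself is then routine by monotonicity and the uniform flux bound \refe{unif_1}.
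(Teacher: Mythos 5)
Your plan follows essentially the same route as the paper: steady solutions are built as orbits of the stationary ODE in each subdomain, with a rescaled boundary layer anchored at $u=1$ on the side $x<0$ (or at $u=0$ on the side $x>0$) so that the degenerate capillary condition $(1-u_1)u_2=0$ holds exactly, and the limit $\eps\to0$ recovers the piecewise constant profiles; the case analysis on the sign of $f_j(\k)$ is also the paper's. Two remarks. First, the shooting argument in $Q^\eps$ is superfluous: the paper fixes the flux exactly equal to $f_j(\k)$ for every $\eps$, writing the layer as $y(-x/\eps)$ with $\frac{\mathrm{d}}{\mathrm{d}x}\varphi_1(y)=f_j(\k)-f_1(y)$, $y(0)=1$, so that $f_1(\k^\eps)-\eps\partial_x\varphi_1(\k^\eps)\equiv f_j(\k)$ on $\O_1$, while the other side is taken constant equal to a root of $f_2=f_j(\k)$; the Rankine--Hugoniot condition is then exact for every $\eps$ and there is nothing to match. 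Second, your identification of $\k_{\rm opt}$ is off: in case \eqref{eq:k_opt_1} one has $f_2(b_2)\le 0$ (since $f_2(0)=0$ and $b_2$ minimizes $f_2$), so $f_1(\nu)=f_2(b_2)$ generically has two distinct roots and $\underline{b}_2=\underline\k_1^2\neq\overline\k_1^2$; hence $\k_{\rm opt}$ is profile iii), not i), and it is reached only by the construction with the layer on the right anchored at $w(0^+)=0$, not by the left-anchored one. In case \eqref{eq:k_opt_2}, $\k_{\rm opt}$ is profile ii). These are bookkeeping slips rather than gaps in the method, since both anchorings appear in your plan, but they matter when one must exhibit the specific family $(\k^\eps)_\eps$ converging to $\k_{\rm opt}$, which is the only part of the proposition actually used later in the convergence proof.
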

\vskip 5pt
\begin{proof}
Let $\k \in \Ee_j$. 
\begin{itemize}
\item
If $\k =1$, the three limits $\t\k^j$ are identically equal to $1$, which is a steady solution 
fulfilling Proposition~\ref{steady_prop}. 
\item
We suppose now that $\k<1$, and $f_j(\k)>0$. Even in this case, the three reachable limit are 
the same. Thus we only have to build one sequence of converging steady solutions.
Let $y$ be a solution of:
\be\label{EDO}
\left\{\begin{array}{l}
\ds \frac{\mathrm{d}}{\mathrm{d}x}\varphi_1(y)= f_j(\k)-f_1(y),    \qquad\textrm{ for }x>0,\\[10pt]
y(0)=1.
\end{array}\right.
\ee
The solution $y(x)$ converges to $\overline\k^j_1$ as $x\to +\infty$.
The family ${(\k^\eps)}_\eps$ defined by: $\forall \eps$ 
\be\label{k_eps}
\k^\eps(x)=\left\{\begin{array}{ll}
\overline\k_2^j \textrm{ or } \underline \k_2^j &\textrm{ if }x>0,\\
y(-x/\eps) &\textrm{ if }x<0.
\end{array}
\right.
\ee
fulfills so the conclusion of Proposition~\ref{steady_prop}.
\item Suppose now $f_j(\k)=0$. The solutions $\k^\eps(x)$ built with \refe{EDO}-\refe{k_eps} converges toward
the two reachable steady states $i)$ and $ii)$. One can also choose $\k^\eps(x)=0$, which is of course a steady solution. 
\item It remains the case $f_j(\k)<0$. The solutions $\k^\eps(x)$ built with \refe{EDO}-\refe{k_eps} still converges toward
the two reachable steady states $i)$ and $ii)$.\\
Let $w$ be a solution of 
\be\label{EDO2}\nn
\left\{\begin{array}{l}
\ds \frac{\mathrm{d}}{\mathrm{d}x}\varphi_2(w)=f_2(w) - f_j(\k),    \qquad\textrm{ for }x>0,\\[10pt]
w(0)=0.
\end{array}\right.
\ee
and 
$$\k^\eps(x)=\left\{\begin{array}{ll}
 \underline \k_1^j &\textrm{ if }x<0,\\
w(x/\eps) &\textrm{ if }x>0,
\end{array}
\right.
$$
then $\k^\eps$ converges toward the third reachable steady state.
\end{itemize}
\end{proof}

\section{Convergence toward the entropy solution}\label{conv_section}

\subsection{almost everywhere convergence}\label{aeconv}
We state now a lemma which is an adaptation of Helly's selection theorem criterion, 
which states that if ${(v^\eps)}_\eps$ is a family of measurable functions on an open 
subset $\Uu$ of $\R^k$ ($k\ge 1$), uniformly bounded in $L_{loc}^\infty(\Uu)$ and 
$BV_{loc}(\Uu)$, one can extract a subfamily still denoted by ${(v^\eps)}_\eps$ that converges almost everywhere in $\Uu$, and the limit belongs to $BV_{loc}(\Uu)$.
\vskip 5pt
\begin{lemma}\label{Helly}
Let $\Uu$ be an open subset of $\R^k$ ($k\ge1$).
Let ${(v^\eps)}_\eps$ be a family of functions uniformly bounded in $L^\infty(\Uu)$ with respect to 
$\eps$. 
Let $K$ be a compact subset of $\Uu$. For $\zeta\in \R^k$, we denote by 
$$K_\zeta = \{\  x\in K \ | \ x+\zeta \in K\   \}.$$
One assumes that there exists $C>0$ depending only on $K$ 
(and thus not on $\eps$) and a function $r$ 
fulfilling $\lim_{\eps\to 0} r(\eps)=0$ , such that
for all $\zeta\in \R^k$, 
\be\label{BVeps}
\int_{K_\zeta} |v^\eps(x+\zeta)-v^\eps(x)| dx \le C |\zeta| + r(\eps).
\ee
Then, there exists a sequence ${(\eps_n)}_n$ tending to 0, and $v\in BV_{loc}(\Uu)$ such that 
$$
\lim_{n\to +\infty}v^{\eps_n} = v \qquad\textrm{ a.e. in }\Uu.
$$
\end{lemma}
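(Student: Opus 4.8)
The plan is to combine a diagonal extraction over a countable exhaustion of $\Uu$ with a Fréchet--Kolmogorov-type compactness argument in $L^1_{loc}$, upgraded to almost-everywhere convergence via a further diagonal extraction and the $BV_{loc}$ bound on the limit. First I would fix an increasing sequence of compact sets $K_m$ with $K_m \subset \mathrm{int}(K_{m+1})$ and $\bigcup_m K_m = \Uu$; by hypothesis each $K_m$ carries a constant $C_m$ and the common function $r(\eps)\to 0$ so that \refe{BVeps} holds on $K_m$. On a fixed $K_m$, the family ${(v^\eps)}_\eps$ is bounded in $L^\infty(K_m)\subset L^1(K_m)$ and, by \refe{BVeps}, its translates satisfy $\int_{(K_m)_\zeta}|v^\eps(x+\zeta)-v^\eps(x)|\,dx \le C_m|\zeta| + r(\eps)$. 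The key point is that since $r(\eps)\to 0$, for every $\delta>0$ there is $\eps_0$ such that for $\eps<\eps_0$ the modulus of continuity of translation is bounded by $C_m|\zeta| + \delta$, which is small uniformly in $\eps$ once $|\zeta|$ and $\delta$ are small; together with the uniform $L^\infty$ bound (hence no mass escapes to infinity on the bounded set $K_m$), the Fréchet--Kolmogorov--Riesz theorem gives relative compactness of ${(v^\eps)}_{\eps<\eps_0}$ in $L^1(K_m')$ for any $K_m'\Subset \mathrm{int}(K_m)$.

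Next I would extract a sequence. Pick $\eps_n\to 0$ arbitrary to start; applying the compactness on $K_1$ yields a subsequence converging in $L^1(K_1)$; applying it on $K_2$ yields a further subsequence converging in $L^1(K_2)$; iterating and taking the diagonal sequence, still denoted $(\eps_n)$, we obtain $v^{\eps_n} \to v$ in $L^1_{loc}(\Uu)$ for some $v\in L^1_{loc}(\Uu)$. To see $v\in BV_{loc}(\Uu)$, fix $K_m$ and $\zeta$; passing to the limit $n\to\infty$ in \refe{BVeps} (the $r(\eps_n)$ term vanishes) gives $\int_{(K_m)_\zeta}|v(x+\zeta)-v(x)|\,dx \le C_m|\zeta|$, and dividing by $|\zeta|$ and letting $\zeta\to 0$ along coordinate directions shows each distributional partial derivative of $v$ is a measure of total mass $\le C_m$ on $\mathrm{int}(K_m)$, i.e. $v\in BV_{loc}(\Uu)$. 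The $L^\infty$ bound passes to the limit as well.

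Finally, to promote $L^1_{loc}$ convergence to almost-everywhere convergence, I would extract once more: $L^1_{loc}$ convergence on the countable exhaustion $(K_m)$ allows, by a further diagonal extraction, selecting a subsequence (not relabeled) converging pointwise a.e. on each $K_m$, hence a.e. on $\Uu$. The main obstacle — and the only place where the statement differs from the textbook Helly/Kolmogorov result — is the presence of the error term $r(\eps)$ in the translation estimate: one must check carefully that, because $r(\eps)\to 0$, this term does not spoil the equicontinuity-of-translations hypothesis of Fréchet--Kolmogorov, which is why the compactness is applied to the tail ${(v^\eps)}_{\eps<\eps_0}$ for suitable $\eps_0$ depending on the desired precision, rather than to the whole family at once. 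Everything else is the standard diagonal-subsequence bookkeeping over the exhaustion $(K_m)$.
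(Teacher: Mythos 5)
Your proof is correct, but it follows the alternative route that the paper explicitly mentions and then declines to take: a direct Fr\'echet--Kolmogorov compactness argument in $L^1_{loc}$. The paper instead mollifies, setting $w^\eps=\t v^\eps\star\rho^\eps$ with a mollifier at scale $\eps$, shows that this smooth family is bounded in $BV(K)$ near $\eps=0$ using \refe{BVeps}, applies Helly's selection theorem to $(w^\eps)_\eps$ to get a.e.\ convergence to some $v\in BV$, and then transfers the convergence back to $v^\eps$ via the estimate $\|w^{\eps_n}-v^{\eps_n}\|_{L^1(K)}\le C\eps_n+r(\eps_n)\to 0$. The trade-off is as follows. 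Your route must handle the perturbation $r(\eps)$ inside the equicontinuity-of-translations hypothesis --- the whole family is not equicontinuous under translation, only its tails are, so relative compactness has to be argued by total boundedness ``up to precision $\delta$'' on the tail $\eps<\eps_0(\delta)$ together with the finitely many remaining indices of any given sequence $\eps_n\to 0$; you flag exactly this point, and your fix is the right one. You then recover $v\in BV_{loc}(\Uu)$ separately by passing to the limit in \refe{BVeps} (where $r(\eps_n)$ disappears) and using the translation characterization of $BV$; this step is needed and you include it. The paper's mollification route buys the $BV$ regularity of the limit directly from Helly's theorem and avoids the tail bookkeeping, at the price of the extra comparison $\|w^\eps-v^\eps\|_{L^1}$. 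Both arguments also share the need to shrink to interior compacts ($K'\Subset\mathrm{int}(K)$, respectively $\eps<d(K,\partial\Uu)$) so that translates and mollifications stay inside $\Uu$, which you do. In short: no gap, just a genuinely different (and slightly more delicate, but also more self-contained) path to the same conclusion.
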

\begin{proof}
Let $K$ be a compact subset of $\Uu$.
Estimate \refe{BVeps} says, roughly speaking, that $v^\eps$ is almost a $BV$-function on $K$, 
i.e.  as close to $BV(K)$ as wanted, provided that $\eps$ is supposed to be small enough. 
So we will build a family 
${(w^\eps)}_\eps$ of $BV$-functions, which will be close to the family 
${(v^\eps)}_\eps$, at least for small 
$\eps$, and we will show that ${(w^\eps)}_\eps$ admits an adherence value $v$ in $BV(K)$
for the $L^1(K)$-topology, and that this $v$ is also an adherence value of ${(v^\eps)}_\eps$.
Another proof for the a.e. convergence toward a function $v$ can be derived directly 
from Kolmogorov compactness criterion (see e.g. \cite{Bre83}). But the advantage of the 
following method is that it provides directly some regularity on the limit $v\in BV_{loc}(\Uu)$.
\vskip 5pt
Let ${(\rho^\eps)}_\eps$ be a sequence of mollifiers, i.e.  smooth, non negative and compactly supported functions
with support included in the ball of center $0$ and radius $\eps$, 
and fulfilling $\| \rho^\eps \|_{L^1(\R^k)}=1$ for all $\eps >0$. 
We define the smooth functions 
$$w^\eps=\t v^\eps \star \rho^\eps,$$
where $\t v^\eps(x) = v^\eps (x)$ if $x\in \Uu$ and $\t v^\eps(x) = 0$ if $x \in \Uu^c$.
\vskip 5pt
Thanks to the regularity of $w^\eps$, 
\be\label{BVeps_1}
\int_{K_\zeta} |w^\eps(x+\zeta)-w^\eps(x)| dx \le \| \nabla w^\eps \|_{\left(L^1(K)\right)^k} |\zeta|.
\ee
Suppose that $\eps < d(K,\partial \Uu)$ (with the convention $d(K,\emptyset)=\infty$).
Thanks to \refe{BVeps}, we have also
\be\label{BVeps_2}
\int_{K_\zeta} |w^\eps(x+\zeta)-w^\eps(x)| dx \le \int_{\{K_\zeta+\eps\}} 
|v^\eps(x+\zeta)-v^\eps(x)| dx \le C |\zeta| + r(\eps),
\ee
where 
$$\{K_\zeta+\eps\} = \{ \ x \in \Uu \ | \ d(x,K) \le \eps \}.$$
Since $r(\eps)$ tends to $0$ as $\eps\to 0$, this particularly ensures 
$$
\limsup_{\eps \to 0} \| \nabla w^\eps \|_{\left(L^1(\R^k)\right)^k} \le C.
$$
The family ${(w^\eps)}_\eps$ is thus bounded in $BV(K_\zeta)$ 
in the neighborhood of $\eps=0$, and thus, 
thanks to Helly's selection criterion, there exist $v\in BV(K_\zeta)$, 
and ${(\eps_n)}_n$ tending to 0 such that
$$
w^{\eps_n} \to v \quad \textrm{ a.e. in } K \textrm{ as } n \to \infty.
$$
Furthermore, for all $n\in\N^\star$, 
\beqn
\| w^{\eps_n} - v^{\eps_n} \|_{L^1(K)} & \le & \int_{K}  \int_{B(0,\eps_n)}  | v^{\eps_n}(x-y)-v^{\eps_n}(x) |\rho^{\eps_n}(y) dydx \nn\\
&\le & C \eps_n + r(\eps_n). \nn
\eeqn
This ensures that $v^{\eps_n}$ tends also almost everywhere toward $v$ as $n$ tends to $+\infty$.
\end{proof}
\vskip 10pt
Lemma~\ref{Helly} will be used to prove the following convergence assertion.
\vskip 5pt
\begin{proposition}\label{aeconv_prop} 
Suppose that $u_0\in BV(\R)$, and let $\ue$ be a solution to \refe{Pe}.
Up to an extraction, there exists $u\in L^\infty(\R\times(0,T))$, $0\le u \le 1$ a.e. such that
$$
\ue \to u \textrm{ a.e. in } \R\times(0,T).
$$
Furthermore, there exists $u_1,u_2 \in L^\infty(0,T)$, 
such that
\begin{eqnarray*}
&\ds \lim_{\eta \to 0} \frac{1}{\eta} \int_0^T \int_{-\eta}^{0} | u(x,t) - u_1(t) | dxdt =0, &\\
&\ds \lim_{\eta \to 0} \frac{1}{\eta} \int_0^T \int_{0}^{\eta} | u(x,t) - u_2(t) | dxdt =0. & 
\end{eqnarray*}
\end{proposition}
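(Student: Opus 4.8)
The strategy is to apply Lemma~\ref{Helly} to the family $(\ue)_\eps$ viewed as functions on the open set $\Uu = \R \times (0,T)$, and then to deduce the existence of the interface traces $u_1, u_2$ from the quantitative modulus of continuity in the $x$-variable.

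First I would establish the hypothesis \refe{BVeps} of Lemma~\ref{Helly} for $(\ue)_\eps$ on an arbitrary compact rectangle $K = [a,b]\times[t_0,t_1]$ with $[a,b]\subset\overline\O_i$ (note that a compact subset of $\R\times(0,T)$ which meets the interface must be split into its parts in $\overline\O_1$ and $\overline\O_2$, on each of which one argues separately and then adds). The key point is that $\ue = f_i^{-1}\circ F_i(\ue,0)$ is \emph{not} available directly because $f_i$ is non-monotone; however, using Assumption~\refe{croissance_hyp2} one can write $\ue$ as a Lipschitz function of the pair $\bigl(F_i(\ue,0),\, F_i(\ue,b_i)\bigr)$, or more simply observe that $F_i(\cdot,\k_1)$ and $F_i(\cdot,\k_2)$ for two well-chosen constants $\k_1<b_i<\k_2$ together determine $\ue$ in a bi-Lipschitz way on $[0,1]$. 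Then the space-translation estimate for $\ue$ follows from Proposition~\ref{BV_prop} applied with $K$ and with $\k=\k_1$ and $\k=\k_2$: each gives
$$
\iint |z^\eps_{K,\k}(x+\xi,t)-z^\eps_{K,\k}(x,t)|\,dxdt \le C|\xi| + r_{K,\k}(\eps),
$$
and combining the two (plus the $L^\infty$ bound $0\le\ue\le1$ to handle the cutoff near $\partial K$) yields $\int_{K_\xi}|\ue(x+\xi,t)-\ue(x,t)|\,dxdt \le C|\xi| + r_K(\eps)$. The time-translation estimate is even more direct: from \refe{comp2} applied to $\ue(\cdot,\cdot+h)$ and $\ue$, together with Lemma~\ref{init_lem} (which bounds $\|\partial_x\ue_0\|_{L^1}$), one gets $\int_\R|\ue(x,t+h)-\ue(x,t)|\,dx \le C h$ uniformly in $\eps$, hence the required bound for translations in $(x,t)$ jointly by summing. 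Lemma~\ref{Helly} then produces a subsequence $\ue\to u$ a.e.\ in $\R\times(0,T)$ with $u\in BV_{loc}$; the bounds $0\le u\le 1$ pass to the limit.

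For the interface traces, I would work one side at a time, say $x\in(-\eta,0)$. From Proposition~\ref{BV_prop} (or rather its proof) the map $t\mapsto F_1(\ue,\k)(x,t)$ has, on $K=[a,0]\times(0,T)$, an $x$-modulus of continuity of the form $C|\xi|+r_K(\eps)$ which is \emph{uniform} in $t$ after integration in $t$; passing to the limit $\eps\to0$ one obtains that for each fixed $\k\in\{\k_1,\k_2\}$ the function $x\mapsto F_1(u,\cdot)(x,\cdot)\in L^1(0,T)$ has a genuine $BV$-type modulus of continuity up to $x=0$, hence admits a strong $L^1(0,T)$-trace at $x=0$ in the averaged sense
$$
\lim_{\eta\to0}\frac1\eta\int_0^T\int_{-\eta}^0 |F_1(u,\k)(x,t)-g_{1,\k}(t)|\,dxdt=0.
$$
Using the bi-Lipschitz dependence of $u$ on the pair $(F_1(u,\k_1),F_1(u,\k_2))$ mentioned above, the two trace functions $g_{1,\k_1}, g_{1,\k_2}$ combine into a single $u_1\in L^\infty(0,T)$ with $0\le u_1\le1$ and the claimed averaged convergence; symmetrically for $u_2$ on $(0,\eta)$ using $\O_2$.

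\textbf{Main obstacle.} The delicate point is the non-monotonicity of the $f_i$: one cannot simply invert $f_i$ to recover $\ue$ from a single entropy flux $F_i(\ue,0)$, so the whole argument has to be run with (at least) two values of $\k$ straddling the extremum $b_i$ and one must check that the resulting map $[0,1]\ni s\mapsto (F_i(s,\k_1),F_i(s,\k_2))$ is bi-Lipschitz — this uses precisely \refe{croissance_hyp2} (strict monotonicity on each of $(0,b_i)$ and $(b_i,1)$, so that on each branch $F_i(\cdot,\k)$ is itself bi-Lipschitz, and the two branches are separated by looking at the appropriate $\k$). A secondary nuisance is bookkeeping the error term $r_K(\eps)$ through the convolution in Lemma~\ref{Helly} and through the passage to the trace; but since $r_{K,\k}(\eps)\to0$ uniformly in $\k$ by Proposition~\ref{BV_prop}, this causes no real difficulty.
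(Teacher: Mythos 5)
Your overall architecture --- space/time translation estimates fed into Lemma~\ref{Helly}, time translates via \refe{comp2}, traces from the resulting $BV$ regularity --- is the same as the paper's. But the step on which everything hinges fails as stated: the map $s\mapsto\bigl(F_i(s,\k_1),F_i(s,\k_2)\bigr)$ with $\k_1<b_i<\k_2$ is not injective on $[0,1]$, let alone bi-Lipschitz. Indeed, pick $s\in(\k_1,b_i)$ and $s'\in(b_i,\k_2)$ with $f_i(s)=f_i(s')$; such pairs exist because, by \refe{croissance_hyp2}, $f_i\bigl((\k_1,b_i)\bigr)$ and $f_i\bigl((b_i,\k_2)\bigr)$ are both intervals with left endpoint $f_i(b_i)$, hence overlap. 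Since $s,s'>\k_1$ one gets $F_i(s,\k_1)=f_i(s)-f_i(\k_1)=F_i(s',\k_1)$, and since $s,s'<\k_2$ one gets $F_i(s,\k_2)=f_i(\k_2)-f_i(s)=F_i(s',\k_2)$: the two branches of $f_i$ are not separated by any $\k$ lying outside $(s,s')$. (Separately, even an injective choice could not have a Lipschitz inverse, since $f_i'(b_i)=0$; but only continuity of the inverse is actually needed for a.e.\ convergence, so "bi-Lipschitz" is both false and unnecessary.) The same defect undermines your derivation of the interface traces, which rests on the same pair construction.

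The paper circumvents this by averaging over \emph{all} $\k$: it sets $H_i(u)=\int_0^1\bigl(F_i(u,\s)-f_i(\s)\bigr)\,d\s$, which inherits the translation estimates of Proposition~\ref{BV_prop} because $r_{K,\k}(\eps)\to0$ uniformly in $\k$, and an integration by parts gives $H_i(u)=2A_i(u)+(2b_i-1)f_i(u)$ with $A_i(u)=\int_0^u(\s-b_i)f_i'(\s)\,d\s$. By \refe{croissance_hyp2} the integrand $(\s-b_i)f_i'(\s)$ is nonnegative and $A_i$ is increasing; Lemma~\ref{Helly} is then applied to $A_i(\ue)$ (not to $\ue$ itself), and a.e.\ convergence of $\ue$ together with the existence of the one-sided traces follows by composing with the continuous inverse of $A_i$ and using that $A_i(u)\in BV$ admits strong traces at $x=0$. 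A shortcut in the same spirit is to take the single value $\k=b_i$: under \refe{croissance_hyp2} the function $s\mapsto F_i(s,b_i)=\sign(s-b_i)\bigl(f_i(s)-f_i(b_i)\bigr)$ is increasing on all of $[0,1]$, so Proposition~\ref{BV_prop} with this one $\k$ already yields an increasing, continuously invertible function of $\ue$ with the required translation estimates. Your first parenthetical suggestion, involving $F_i(\ue,b_i)$, points in that direction; but as written your proof rests on the two-sided straddling claim, which is false.
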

\begin{proof}
Let $K$ be a compact subset of $\overline\O_i$.\\
We define the function $H_i:[0,1]\mapsto \R$ by 
\be\label{H}
H_i(u)=\int_0^1 \left( F_i(u,\s) - f_i(\s) \right)d\s, 
\ee
so that, thanks to Proposition~\ref{BV_prop}, there exists $C$ depending on $u_0, f_i, T, K$, and a function $r$ tending to $0$ as 
$\eps$ tends to 0 such that for all $\xi\in \R$, $h\in (0,T)$, 
\be\label{H2}
 \int_0^{T-h} \int_K  \left|  H_i(u^\eps)(x+\xi,t+h) - H_i(u^\eps)(x,t)\right| dxdt  \le C(|\xi|+|h|)+ r(\eps).
\ee
An integration by parts in \refe{H} yields: $\forall u \in [0,1]$
\beqn
H_i(u) & = & -\int_0^1(\s-b_i) \partial_\s (F_i(u,\s)-f_i(\s))d\s + (2 b_i-1)f_i(u) \nn \\
 & = & 2 \int_0^u (\s-b_i) f_i'(\s) d\s + (2 b_i-1)f_i(u) \label{H3} 
\eeqn
where, thanks to \refe{croissance_hyp2},  $f_i$ is decreasing on $[0,b_i]$ and increasing on $[b_i,1]$.
Using Proposition \ref{BV_prop} with $\k=0$,
\be\label{H4}
 \int_0^{T-h} \int_K  \left|  f_i(u^\eps)(x+\xi,t+h) - f_i(u^\eps)(x,t)\right| dxdt  \le C(|\xi|+|h|)+ r(\eps),
 \ee
 where $C$ and $r$ have been updated.
Denoting by $$A_i(u)= \ds \int_0^u (\s-b_i) f_i'(\s) d\s,$$ we obtain from \refe{H3} and \refe{H4}
\be\label{A}
 \int_0^{T-h} \int_K  \left|  A_i(u^\eps)(x+\xi,t+h) - A_i(u^\eps)(x,t)\right| dxdt  \le C(|\xi|+|h|)+ r(\eps),
 \ee
 with a new update for $C$ and $r$.
Thus we deduce from Proposition~\ref{Helly} that, up to an extraction, $A_i(\ue)$ converges 
almost everywhere toward $\overline A_i \in BV(K \times (0,T))$. 
It follows from \refe{croissance_hyp2}  that $A_i$ is an increasing function, and so 
we obtain the convergence almost everywhere  in $K\times (0,T)$ of $\ue$ toward a measurable function $v$.
\vskip 5pt
Since for all $\eps>0$, $0\le \ue \le 1$, there exists $u\in L^\infty(\R\times(0,T))$, $0\le u \le 1$ such that
$\ue$ converges to $u$ in the $L^\infty(\R\times(0,T))$-$\star$-weak sense, we thus have, 
up to an extraction,
\be\label{aeconvK}
\ue \to u \textrm{ a.e. in } K\times(0,T).
\ee
Since \refe{aeconvK} holds for any compact subset $K$ of $\O_i$, for $i=1,2$, we can claim that, 
up to an extraction, 
$$
\ue \to u \textrm{ a.e. in } \R\times(0,T).
$$
Moreover, since $A_i(u)$ belongs to $BV(\R\times(0,T))$, we can claim that $A_i(u)$ admits 
a strong trace on $\{x=0\}\times(0,T)$ (see for instance \cite{attouch2006}). Using once again the fact 
that $A_i^{-1}$ is a continuous function, we can claim that $u$ admits also a strong trace on 
each side of the interface.
\end{proof}
\subsection{convergence toward the entropy solution}
In this section, it is proven that the limit value $u$ for the family $\left(\ue\right)_{\eps}$ exhibited previously is the entropy solution described in Definition~\ref{entro_Pih_def}.
\vskip 5pt
\begin{proposition}\label{conv_entro}
Let $u_0\in BV(\R)$, $0\le u_0 \le 1$ a.e., and let $(\ue_0)_\eps$ a family of 
approximation of $u_0$ given by Lemma~\ref{init_lem}, 
and let ${(\ue)}_\eps$ be the induced sequence  of  bounded flux solution of \Pip. 
Then under Assumption~\refe{croissance_hyp2}
$$\lim_{\eps \rightarrow 0} \ue = u \textrm{ in } L^p_{loc}(\R\times[0,T]),\quad \forall p\in[1,\infty) $$
where $u$ is the unique entropy solution to \Pih~associated to initial data $u_0$.
\end{proposition}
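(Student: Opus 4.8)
The plan is to show that the a.e.\ limit $u$ produced by Proposition~\ref{aeconv_prop} satisfies the four requirements of Definition~\ref{entro_Pih_def}, and then to conclude by the uniqueness statement of Theorem~\ref{th_Bac}. Since $0\le\ue\le1$, a.e.\ convergence automatically gives $\ue\to u$ in $L^p_{loc}(\R\times[0,T])$ for every finite $p$ by dominated convergence, and likewise $\ue_0\to u_0$ in $L^1_{loc}$ by Lemma~\ref{init_lem}; I will use this freely. Point~1 of Definition~\ref{entro_Pih_def} is immediate. For point~2 I would pass to the limit in~\refe{bounded_for_eps}: the fluxes $f_i(\ue)$ converge to $f_i(u)$ in $L^1_{loc}$, and the diffusive term disappears because, by Proposition~\ref{L2H1}, $\sqrt\eps\,\partial_x\phii(\ue)$ is bounded in $L^2_{loc}(\OiT)$, hence $\eps\,\partial_x\phii(\ue)=\sqrt\eps\cdot\sqrt\eps\,\partial_x\phii(\ue)\to0$ in $L^2_{loc}$. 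For point~3 I would start from the Carrillo-type entropy inequality~\refe{car1}, valid for $\ue$ in each $\O_i$, and pass to the limit the same way: $|\ue-\k|\to|u-\k|$ and $F_i(\ue,\k)\to F_i(u,\k)$ in $L^1_{loc}$, while $\bigl|\eps\,\partial_x|\phii(\ue)-\phii(\k)|\bigr|\le\eps\,|\partial_x\phii(\ue)|\to0$ in $L^2_{loc}$; this gives exactly~\refe{entro_Pih_for}.

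The core of the proof is point~4, the interface condition~\refe{eq:comp_opt}, and this is where Assumption~\refe{croissance_hyp2} and the steady states of Section~\ref{steady_sol} are used. By Proposition~\ref{steady_prop}, the optimal connection $\k_{\rm opt}$ of~\refe{eq:k_opt_1}--\refe{eq:k_opt_2} is a reachable steady state, so there is a family $(\k^\eps)_\eps$ of steady bounded flux solutions to~\refe{Pe} with $\k^\eps\to\k_{\rm opt}$ a.e.\ in each $\O_i$. Since~\refe{Pe} is of type~\Pip, Proposition~\ref{bounded_comp} applies to the pair $(\ue,\k^\eps)$; adding the inequalities~\refe{bounded_comp_for} written for the $+$ and the $-$ parts yields, for every $\psi\in\Dd^+(\R\times[0,T))$, the Kru\v{z}kov-type inequality carried by the entropy $|\ue-\k^\eps|$, with the additional term
\[
-\,\eps\sum_{i=1,2}\int_0^T\!\!\int_{\O_i}\partial_x|\phii(\ue)-\phii(\k^\eps)|\,\partial_x\psi\,dxdt .
\]
I would then let $\eps\to0$. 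The first three terms converge to the corresponding quantities with $(u,\k_{\rm opt})$ by a.e.\ convergence and the uniform bounds, using also $\ue_0\to u_0$; and the additional term vanishes, because $\bigl|\eps\,\partial_x|\phii(\ue)-\phii(\k^\eps)|\bigr|\le\eps\,|\partial_x\phii(\ue)|+\eps\,|\partial_x\phii(\k^\eps)|$, where the first summand tends to $0$ in $L^2_{loc}$ as above, and the second stays bounded in $L^\infty$ and tends to $0$ a.e.: on the $\eps$-wide boundary layer it equals exactly $\pm\bigl(f_j(\k)-f_i(\k^\eps)\bigr)$ by the defining ODE~\refe{EDO}, which is bounded and converges to $0$ a.e.\ since $f_i(\overline\k_i^j)=f_i(\underline\k_i^j)=f_j(\k)$, while elsewhere $\k^\eps$ is locally constant. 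In the limit this is precisely~\refe{eq:comp_opt}, so $u$ is an entropy solution to~\Pih.

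It remains to remove the extraction and identify the limit. By Theorem~\ref{th_Bac} the entropy solution to~\Pih\ with datum $u_0$ is unique; applying Proposition~\ref{aeconv_prop} along an arbitrary subsequence of $(\ue)_\eps$ produces a further a.e.-convergent sub-subsequence whose limit, by the above, is that unique entropy solution $u$. Hence the whole family converges to $u$ a.e.\ in $\R\times(0,T)$, and therefore in $L^p_{loc}(\R\times[0,T])$ for every $p\in[1,\infty)$ by the $L^\infty$ bound, which is the claim.

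I expect the main obstacle to be the control, in the limit, of the $\eps$-scaled diffusive term built from $\k^\eps$: the derivative $\partial_x\phii(\k^\eps)$ blows up like $1/\eps$ across the boundary layer, and it is only the prefactor $\eps$ that compensates, leaving a bounded quantity with an a.e.\ limit equal to $0$, so that dominated convergence can be invoked on compact sets. A secondary technical point is that $\k^\eps$ is not integrable at infinity, so it is not a bounded flux solution in the sense needed for the uniqueness Theorem~\ref{bounded_unicite}; but the comparison inequality~\refe{bounded_comp_for} of Proposition~\ref{bounded_comp} is local in the test function and requires no integrability of the data, so it is legitimately applicable here.
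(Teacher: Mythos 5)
Your proposal is correct and follows essentially the same route as the paper: pass to the limit in the weak formulation and in Carrillo's entropy inequalities for points 1--3, then obtain the interface condition~\eqref{eq:comp_opt} by comparing $\ue$ with the steady states $\k^\eps\to\k_{\rm opt}$ of Proposition~\ref{steady_prop} via Proposition~\ref{bounded_comp}, and conclude by uniqueness. Your extra remarks --- the explicit boundary-layer computation showing $\eps\,\partial_x\phii(\k^\eps)=\pm\bigl(f_j(\k)-f_i(\k^\eps)\bigr)$ stays bounded and tends to $0$ a.e., and the observation that Proposition~\ref{bounded_comp} needs no $L^1$ integrability of $\k^\eps$ --- are sound elaborations of steps the paper leaves terse, not a different argument.
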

\vskip 5pt
\begin{proof}
Thanks to Proposition~\ref{aeconv_prop}, we can suppose that there exists $u \in L^\infty(\R\times(0,T))$ such that, up to a subsequence, 
\be\label{eq:aeconv_1}
\ue \to u \textrm{ in } L^1_{loc}(\R\times(0,T)) \textrm{ as } \eps \to 0, 
\ee
then 
$$
f_i(\ue) \to f_i(u)  \textrm{ in } L^1_{loc}(\O_i\times(0,T)) \textrm{ as } \eps \to 0.
$$
Furthermore, thanks to Proposition~\ref{L2H1}, one has
\be\label{eq:aeconv_05}
\eps \partial_x \phii(\ue) \to 0 \textrm{ in } L^{1}_{loc}(\O_i\times(0,T)) \textrm{ as } \eps \to 0.
\ee
As a consequence, letting $\eps$ tend to $0$ in \eqref{bounded_for_eps} provides (recall that $u_0^\eps$ tends to $u_0$ in $L^1_{loc}(\R)$) that $u$ is a weak solution to \Pih, i.e. that it satisfies~\eqref{eq:weak}.
\vskip 5pt
Since for $\eps>0$, $\left(\eps\varphi_i\right)^{-1}$ is a continuous function, it follows from the work of Carrillo \cite{Car99} that it fulfills the following entropy inequalities: $\forall \k\in[0,1]$, $\forall \psi\in \Dd^+(\O_i\times[0,T))$
\beqn
\lefteqn{ \int_0^T\hspace{-5pt}\int_{\O_i} | \ue - \k |Ê\partial_t \psi \, dxdt + \int_{\O_i} | u_0^\eps - \k | \psi(\cdot, 0) \, dx }\nn \\
&&+ \int_0^T\hspace{-5pt}\int_{\O_i} \left(  F_i(\ue, \k) - \eps \partial_x \left| \phii(\ue) - \phii(\k) \right| \right)\partial_x \psi \, dxdt \ge 0. \label{eq:entro_car}
\eeqn
The assertion~\eqref{eq:aeconv_1} particularly yields that for $i=1,2$ and for all $\k\in [0,1]$, 
\be\label{eq:aeconv_2}
F_i(\ue, \k) \to F_i(u,\k) \textrm{ in } L^1_{loc}(\R\times(0,T)) \textrm{ as }Ê\eps \to 0.
\ee
On the other hand, it follows from~\eqref{eq:aeconv_05} that 
\be\label{eq:aeconv_3}
\eps \partial_x | \phii(\ue) - \phii(\k)| \to 0 \textrm{ in } L^1_{loc}(\R\times(0,T)) \textrm{ as }Ê\eps \to 0.
\ee
Taking \eqref{eq:aeconv_1}, \eqref{eq:aeconv_2} and \eqref{eq:aeconv_3} 
into account in~\eqref{eq:entro_car} provides the inequality~\eqref{entro_Pih_for}. 
\vskip 5pt
The last point remaining to check is that the interface entropy condition~\eqref{eq:comp_opt} holds. Let $\left(\k^\eps\right)_\eps$ be a family of steady states to \eqref{Pe} 
converging  in $L^1_{loc}(\R)$ as $\eps$ tends to $0$ to $\k_{\rm opt}$ defined in \eqref{eq:k_opt_1}-\eqref{eq:k_opt_2}. Recall that such a family exists thanks to Proposition~\ref{steady_prop}. Then for all fixed $\eps>0$, $\k^\eps$ is a steady bounded flux solution. Hence, it follows from \eqref{bounded_comp_for} that 
\beqn
\lefteqn{ \int_0^T\hspace{-5pt}\int_\R | \ue - \k^\eps | \partial_t \psi \, dxdt + \int_\R \left| u_0^\eps - \k^\eps \right| \psi(\cdot,0) \, dxdt
}\nn \\
&& + \int_0^T \sum_{i=1,2} \int_{\O_i} \left(  F_i(\ue, \k^\eps) - \eps\partial_x \left|\phii(\ue) - \phii(\k^\eps) \right|\right)\partial_x \psi \, dxdt \ge 0. 
\label{eq:comp_ke_1}
\eeqn
It follows from Proposition~\ref{L2H1} that 
$$
 \eps\partial_x \left|\phii(\ue) - \phii(\k^\eps) \right|\to 0 \textrm{ in } L^1_{loc}(\OiT) \textrm{ as } \eps \to 0.
$$
Letting $\eps$ tend to $0$ in \eqref{eq:comp_ke_1} provides directly the fourth point~\eqref{eq:comp_opt} the fourth point in Definition~\ref{entro_Pih_def}. 
\end{proof}

\vskip 10pt
We state now the main result, which is in fact the extension of Proposition~\ref{conv_entro} to a larger 
class of initial data.
\vskip 5pt
\begin{theorem}[main result]\label{conv_final}
Let $u_0\in L^\infty(\R)$, $0\le u_0 \le 1$, and let $(\ue_0)_\eps$ a family of 
approximation of $u_0$ given by Lemma~\ref{init_lem}, 
and let ${(\ue)}_\eps$ be the induced sequence  of  bounded flux solution of \refe{Pe}. 
Then under Assumption \refe{croissance_hyp2},
$$\lim_{\eps \rightarrow 0} \ue = u \textrm{ in } L^p_{loc}(\R\times[0,T]),\quad \forall p\in[1,\infty)$$
where $u$ is the unique entropy solution to \Pih~associated to initial data $u_0$.
\end{theorem}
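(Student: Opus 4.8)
The plan is to deduce the statement from Proposition~\ref{conv_entro} by a density argument on the initial datum, using the $L^1$-contraction structure available both for the approximate problems~\refe{Pe} (Proposition~\ref{bounded_comp}) and for~\Pih (Theorem~\ref{th_Bac}). Given $u_0\in L^\infty(\R)$ with $0\le u_0\le1$, I would fix, for each $\delta>0$, a regularization $u_0^\delta=\big(u_0\,\chi_{\{|x|<1/\delta\}}\big)\star\rho_\delta\in\Cc_c^\infty(\R)$, where $\rho_\delta$ is a mollifier of width $\delta$; then $0\le u_0^\delta\le1$, $u_0^\delta\in BV(\R)$, and $u_0^\delta\to u_0$ almost everywhere and in $L^1_{loc}(\R)$ as $\delta\to0$. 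Since $u_0^\delta\in BV(\R)$, Proposition~\ref{conv_entro} applies (this is where Assumption~\refe{croissance_hyp2} is used): denoting by $(u_0^{\delta,\eps})_\eps$ the family associated with $u_0^\delta$ by Lemma~\ref{init_lem} and by $(u^{\delta,\eps})_\eps$ the induced bounded flux solutions of~\refe{Pe}, one has, for every fixed $\delta$, $u^{\delta,\eps}\to u^\delta$ in $L^p_{loc}(\R\times[0,T])$ ($p\in[1,\infty)$) as $\eps\to0$, where $u^\delta$ is the entropy solution to~\Pih with datum $u_0^\delta$.

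The core of the argument is a \emph{local} comparison estimate for~\refe{Pe}, with an error that is uniform in $\delta$. For fixed $\delta$ and $\eps$, both $\ue$ and $u^{\delta,\eps}$ are bounded flux solutions of~\refe{Pe} in the sense of Definition~\ref{sol_Pe_def} (the data $\ue_0,u_0^{\delta,\eps}\in\Cc_c^\infty(\R^\star)$ vanish near $x=0$, so the graphical interface condition reduces to $(-\infty,P_1]\cap(-\infty,P_2]\neq\emptyset$, which holds since $P_1<P_2$). I would apply Proposition~\ref{bounded_comp} to this pair with a cut-off function moving at the speed $C=\max_i Lip(f_i)$, as in the proof of the comparison principle of Theorem~\ref{th_Bac}, and bound the capillary contribution in~\refe{bounded_comp_for}, namely $-\eps\sum_i\iint_{\OiT}\partial_x\big(\phii(\ue)-\phii(u^{\delta,\eps})\big)^\pm\partial_x\psi$, pointwise by $\eps|\partial_x\phii(\ue)|+\eps|\partial_x\phii(u^{\delta,\eps})|$ and then in $L^2$ by Proposition~\ref{L2H1}, which gives $\|\eps\partial_x\phii(\ue)\|_{L^2(K\times(0,T))}=\sqrt\eps\,\|\sqrt\eps\,\partial_x\phii(\ue)\|_{L^2(K\times(0,T))}\le C_K\sqrt\eps$ (the constant being uniform over the data produced by Lemma~\ref{init_lem}, via~\refe{unif_1}). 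This should yield, for every $R>0$ and a.e.\ $t\in(0,T)$,
$$
\int_{|x|<R}\big|\ue-u^{\delta,\eps}\big|(x,t)\,dx\ \le\ \int_{|x|<R+CT}\big|\ue_0-u_0^{\delta,\eps}\big|(x)\,dx\ +\ \omega_R(\eps),
$$
with $\omega_R(\eps)\to0$ as $\eps\to0$, independently of $\delta$. (The global estimate~\refe{comp2} cannot replace this, since the supports of $\ue_0$ grow unboundedly as $\eps\to0$ when $u_0\notin L^1$.)

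Next I would split $\|\ue_0-u_0^{\delta,\eps}\|_{L^1(|x|<R+CT)}\le\|\ue_0-u_0\|+\|u_0-u_0^\delta\|+\|u_0^\delta-u_0^{\delta,\eps}\|$ on $\{|x|<R+CT\}$: the first and third terms tend to $0$ as $\eps\to0$ by dominated convergence (Lemma~\ref{init_lem} gives a.e.\ convergence, all functions being valued in $[0,1]$), whence, after integrating in $t$,
$$
\limsup_{\eps\to0}\ \big\|\ue-u^{\delta,\eps}\big\|_{L^1((|x|<R)\times(0,T))}\ \le\ T\,\rho_R(\delta),\qquad \rho_R(\delta):=\|u_0-u_0^\delta\|_{L^1(|x|<R+CT)}.
$$
Combining with the first paragraph gives $\limsup_{\eps\to0}\|\ue-u^\delta\|_{L^1((|x|<R)\times(0,T))}\le T\rho_R(\delta)$, while the comparison principle of Theorem~\ref{th_Bac} for~\Pih yields $\|u^\delta-u\|_{L^1((|x|<R)\times(0,T))}\le T\rho_R(\delta)$, where $u$ denotes the entropy solution to~\Pih with datum $u_0$. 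Therefore $\limsup_{\eps\to0}\|\ue-u\|_{L^1((|x|<R)\times(0,T))}\le2T\rho_R(\delta)$ for every $\delta>0$; letting $\delta\to0$ (so $\rho_R(\delta)\to0$) shows $\ue\to u$ in $L^1_{loc}(\R\times[0,T])$. Finally, since $0\le\ue,u\le1$, the $L^1_{loc}$ convergence upgrades to $L^p_{loc}$ for all $p\in[1,\infty)$ via $\|\ue-u\|_{L^p(Q)}^p\le\|\ue-u\|_{L^1(Q)}$ on every compact $Q$.

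I expect the delicate step to be the second paragraph: proving the local comparison for~\refe{Pe} with an error $\omega_R(\eps)\to0$ that is uniform in $\delta$. This requires controlling the capillary-diffusion term of Proposition~\ref{bounded_comp} through the $\sqrt\eps$-scaling of Proposition~\ref{L2H1} (which makes it an $o(1)$ contribution), the merely $L^\infty$ bound~\refe{unif_1} being insufficient for that purpose; it also forces the use of a finite-propagation cut-off rather than the global contraction~\refe{comp2}, whose right-hand side degenerates because the truncation radius in Lemma~\ref{init_lem} tends to $+\infty$ as $\eps\to0$.
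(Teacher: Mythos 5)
Your proposal is correct and follows essentially the same route as the paper: approximate $u_0$ by $BV$ data, apply Proposition~\ref{conv_entro} to the regularized problem, compare $\ue$ with the approximate solution for the $BV$ datum via Proposition~\ref{bounded_comp} using a cut-off moving at speed $\max_i Lip(f_i)$ (the paper takes $\psi(x,t)=(T-t)\zeta(x,t)$), control the capillary term through Proposition~\ref{L2H1}, and invoke the contraction principle of Theorem~\ref{th_Bac} before letting $\eps\to0$ and then the regularization parameter tend to $0$. The points you flag as delicate (the need for a finite-propagation cut-off rather than the global estimate~\refe{comp2}, and the $o(1)$ control of the diffusion term) are exactly the ones the paper handles in the same way.
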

\vskip 5pt
\begin{proof}
Let $u_0\in L^\infty(\R)$, $0\le u_0 \le 1$, and let $\nu>0$. There exists $u_{0,\nu}$ in 
$BV(\R)$, $0\le u_{0,\nu} \le 1$ such that for all $R>0$,
\be\label{final7}
\| u_{0,\nu} - u_0 \|_{L^1(-R,R)} \le C(R)\nu.
\ee
If one regularizes $u_{0,\nu}$ into $\ue_{0,\nu}$ using Lemma~\ref{init_lem}, and if one denotes by
$\ue_\nu$ the associated unique bounded flux solution, we have seen that $\ue_\nu$ converges almost 
everywhere to $u_\nu$ as $\eps$ tends to $0$. As previously, we denote by $\ue_0$ the regularization of 
$u_0$ obtained via Lemma~\ref{init_lem}, and $\ue$ the unique associated bounded flux solution.
\vskip 5pt
 Let $R>0$, then we have
\beqn
\int_0^T\int_{-R}^R | \ue - u | dxdt & \le  & \int_0^T\int_{-R}^R | \ue - \ue_\nu | dxdt 
  + \int_0^T\int_{-R}^R | \ue_\nu - u_\nu | dxdt \nn \\
  &&+ \int_0^T\int_{-R}^R | u_\nu - u | dxdt. \label{final1}
\eeqn
The contraction principle stated in Theorem~\ref{th_Bac} yields 
\be\label{final2}
\int_0^T\int_{-R}^R | u_\nu(x,t) - u(x,t) | dxdt \le T \int_{-R-MT}^{R+MT} | u_{0,\nu}(x) - u_0(x) | dx
\ee
where $M \ge \max_{i} Lip(f_i)$. 

We denote by $\zeta(x,t) = \min(1,\left(R+1+M(T-t)-|x|\right)^+)$. One has $\zeta=1$ on $(-R,R)$, 
$\zeta\ge 0$ and $\zeta\in L^1(\R)$ thus
\beqn
\int_0^T\int_{-R}^R | \ue - \ue_\nu | dxdt & \le &  \int_0^T \int_\R  | \ue - \ue_\nu | \zeta dxdt.  \label{final4}
\eeqn
It follows from  Proposition \ref{bounded_comp} that for all $\psi\in W^{1,1}(\R\times(0,T))$ with
$\psi \ge 0$ a.e. and $\psi(\cdot,T) =0$, one has
\beqn
&\ds \int_0^T \int_\R |\ue(x,t)-\ue_\nu(x,t)|\partial_t \psi(x,t)dxdt 
+  \int_\R |\ue_0(x)-\ue_{0,\nu}(x)| \psi(x,0)dx&\nn \\
&\ds +  \int_0^T \sum_{i=1,2} \int_{\O_i} \sign(\ue(x,t)-\ue_\nu(x,t)) 
\left( f_i(\ue)(x,t)-f_i(\ue_\nu)(x,t)\right)\partial_x \psi(x,t)dxdt &\nn\\
&\ds -  \int_0^T \sum_{i=1,2} \int_{\O_i} \eps 
\partial_x|\phii(\ue)(x,t)-\phii(\ue_\nu)(x,t)| \partial_x\psi(x,t)dxdt \ge 0.&
\label{final3}
\eeqn
We denote by 
$$\Lambda_1(t) = [ -R-1 -M(T-t), -R -M(T-t) ],$$
$$\Lambda_2(t) = [ R+M(T-t), R+1 +M(T-t) ],$$
$$  \vartheta(x) =\left\{ \begin{array}{rcl} 
1&\textrm{ if }& x<0, \\
-1&\textrm{ if }& x>0,
\end{array} \right.
$$
so that 
$$ \partial_x \zeta(x,t) = \sum_{i=1,2} \vartheta(x) \chi_{\Lambda_i(t)}(x),\qquad
 \partial_t \zeta(x,t) =  -M \sum_{i=1,2}  \chi_{\Lambda_i(t)}(x).$$
Taking $\psi(x,t)=(T-t)\zeta(x,t)$ in \refe{final3} yields :
\beqn
&\ds -\int_0^T\int_\R  | \ue(x,t) - \ue_\nu(x,t) | \zeta(x,t) dxdt  + T \int_\R |\ue_0(x)-\ue_{0,\nu}(x)| \zeta(x,0)dx
&\nn\\
&\ds 
- M \int_0^T (T-t) ) \sum_{i=1,2}\int_{\Lambda_i(t)}  | \ue(x,t) - \ue_\nu(x,t) | dxdt & \nn \\
&\ds + \int_0^T (T-t) \sum_{i=1,2}\int_{\Lambda_i(t)} \sign(\ue(x,t)-\ue_\nu(x,t)) 
\left( f_i(\ue)(x,t)-f_i(\ue_\nu)(x,t)\right) dxdt   &\nn \\
&\ds - \int_0^T (T-t) \sum_{i=1,2}\int_{\Lambda_i(t)} \eps \vartheta(x)
\partial_x |\phii(\ue)(x,t) - \phii(\ue_\nu(x,t))| dxdt \ge 0.&\nn
\eeqn
Since $M \ge \max_i Lip(f_i)$, one has
\beqn
&\ds  \int_0^T (T-t) \sum_{i=1,2}\int_{\Lambda_i(t)} \sign(\ue(x,t)-\ue_\nu(x,t)) 
\left( f_i(\ue)(x,t)-f_i(\ue_\nu)(x,t)\right) dxdt   &\nn \\
&\ds \le M \int_0^T (T-t)  \sum_{i=1,2}\int_{\Lambda_i(t)}  | \ue(x,t) - \ue_\nu(x,t) | dxdt &\nn \label{final5}
\eeqn
and thus
\beqn
&\ds  \int_0^T\int_\R  | \ue(x,t) - \ue_\nu(x,t) | \zeta(x,t) dxdt  
\le T \int_\R |\ue_0(x)-\ue_{0,\nu}(x)| \zeta(x,0)dx& \nn \\ 
&\ds  - \int_0^T (T-t) \sum_{i=1,2}\int_{\Lambda_i(t)} \eps \vartheta(x)
\partial_x |\phii(\ue)(x,t) - \phii(\ue_\nu(x,t))| dxdt. &\label{final6}
\eeqn
We deduce, using \refe{final2} \refe{final4} and \refe{final6} in \refe{final1}, that
\beqn
&\ds \int_0^T\int_{-R}^R | \ue(x,t) - u(x,t) | dxdt  \le  T \int_\R |\ue_0(x) - \ue_{0,\nu}(x)| \zeta(x,0) dx & \nn\\
&\ds - \int_0^T (T-t) \sum_{i=1,2}\int_{\Lambda_i(t)} \eps \vartheta(x)
\partial_x |\phii(\ue)(x,t) - \phii(\ue_\nu(x,t))| dxdt &\nn\\
&\ds +  T \int_{-R-MT}^{R+MT} | u_{0,\nu}(x) - u_0(x) | dx 
 +  \int_0^T\int_{-R}^R | \ue_\nu(x,t) - u_\nu(x,t) | dxdt. & \label{final8}
\eeqn
We can now let $\eps$ tend to $0$. Thanks to Proposition \ref{L2H1}, we can claim 
that 
$$
\lim_{\eps\to 0}  \int_0^T (T-t) \sum_{i=1,2}\int_{\Lambda_i(t)} \eps \vartheta(x)
\partial_x |\phii(\ue)(x,t) - \phii(\ue_\nu(x,t))| dxdt = 0.
$$
We also deduce from Proposition \ref{conv_entro} that
$$
\lim_{\eps\to 0} \int_0^T\int_{-R}^R | \ue_\nu(x,t) - u_\nu(x,t) | dxdt = 0.
$$
Since $\zeta(x,0)$ is compactly supported, it follows from Lemma \ref{init_lem} and the 
dominated convergence theorem that 
$$
\lim_{\eps\to 0} \int_\R |\ue_0(x) - \ue_{0,\nu}(x)| \zeta(x,0)dx  =   
\int_\R |u_0(x) - u_{0,\nu}(x)| \zeta(x,0)dx.
$$
Thus \refe{final8} becomes:
\beqn
\limsup_{\eps\to 0} \int_0^T\int_{-R}^R | \ue(x,t) - u(x,t) | dxdt &\le& 
T \int_{-R-MT}^{R+MT} | u_{0,\nu}(x) - u_0(x) | dx \nn\\
&&+ \int_\R |u_0(x) - u_{0,\nu}(x)| \zeta(x,0)dx. \label{final9}
\eeqn
The inequality \refe{final9} holds for any $\nu>0$, and letting $\nu$ tend to $0$ leads to
$$
\lim_{\eps\to 0} \int_0^T\int_{-R}^R | \ue(x,t) - u(x,t) | dxdt = 0,
$$
then $\ue $ tends to $u$ in $L^1_{loc}(\R\times[0,T])$ as $\eps\to 0$. 
Since $0 \le \ue \le 1$ a.e., H\"older inequality gives the convergence in $L^p_{loc}(\R\times[0,T])$ 
for all finite $p$.
\end{proof}

\bibliographystyle{plain}
\bibliography{ccances}
\end{document}